\newtheorem{definition}{Definition}[section]
\newtheorem{prop}{Proposition}[section]
\newtheorem{thm}{Theorem}[section]
\newtheorem{cor}{Corollary}[section]
\begin{document}
	\begin{center}
		\textbf{Enumeration of subsets with closedness in finite fields of characteristic 2}
	\end{center}
	\begin{center}
		\textbf{Nithish Kumar R$^1$, Vadiraja Bhatta G. R.$^2$ and Prasanna Poojary$^3$}\\[3pt]
		$^{1,2}$Department of Mathematics, Manipal Institute of Technology, Manipal Academy of Higher
		Education, Manipal 576104, India.\\[3pt]
		$^3$Department of Mathematics, Manipal Institute of Technology Bengaluru, Manipal Academy of
		Higher Education, Manipal 576104, India.\\[3pt]
		$^1$nithishk53@gmail.com, $^2$vadiraja.bhatta@manipal.edu, $^3$poojary.prasanna@manipal.edu
	\end{center}
	\begin{abstract}
		The additive closedness in the subset of an additive group is termed as $r$-value. The nature of closedness in different subsets of fixed size is observed as a spectrum of $r$-values. We enumerate $r$-values of subsets in finite fields of characteristic 2 and represent them as the spectrum of values. Based on these values the subsets can be further studied as partial Steiner triple systems, sum-free sets, Sidon sets, and Schure triples.
	\end{abstract}
	\textbf{Keywords:} $r$-values, Sum-free sets, Sidon sets, Spectrum, Characteristic 2.
	\section{Introduction}	
	In an additive group $G$, a subset is associated with a non-negative integer called $r$-value, representing the amount of closedness in it. This concept was initially defined and studied by Sophie et al. in 2009 \cite{HUCZYNSKA2009831}. Formally, for a subset $A$ of an additive group $G$, $r$-value of $A$ denoted by $r(A)$ is defined as the number of ordered pairs in $A$, such that for each pair sum of its elements is again an element of $A$. For prime $p$, Sophie et al. studied $r$-values of subsets of $\mathbb{Z}_p$ and presented as a spectrum of $r$-values of fixed subset size in $\mathbb{Z}_p$ \cite{HUCZYNSKA2009831}. Continuing, r-closed sets of natural number particularly $r$-values of intervals $[1, N]$ in $\mathbb{N}$ were studied by Sophie in 2014 \cite{Huczynska2011}. In 2024, Nithish et al. defined $r(A,B,C)$, where $A,B,C$ are subsets of an additive group $G$. They obtained $r$-values of subsets of $\mathbb{Z}_n$ by representing each subset as a union of intervals in $\mathbb{Z}_n$ and deriving formula for $r$-values of intervals in $\mathbb{Z}_n$.
	
	\par The zero valued subsets, commonly known as Sum-free sets can be regarded as the initial approach to study $r$-values of subsets. In a sequence of integers, properties like Sum-free sets and Sidon sets connected with $r$-values were studied even before 1965, as mentioned by P. Erdös in \cite{Erdo65}. Terence and Vu recently addressed Erdös' problems in their study on Sum-free sets in groups \cite{sumfreesurvey}. In between, Sum-free sets were studied in view of the maximum cardinality of zero valued sets [See \cite{axioms12080724}, \cite{cameron1990}],  geometric study of Sum-free sets in integer lattice grid [See \cite{LIU2023103614}, \cite{Elsholtz}]. Zero valued sets were used in the process of discovering new	APN(Almost Perfect Nonlinear) exponents \cite{Claude2023} and also results on F-saturated graphs for large complete graphs \cite{TIMMONS2022112659}.

	\par Cardinality of Sidon sets and the generalization of Sidon’s problem on Sidon set is related to r-closed set [See \cite{MARTIN2006591}, \cite{CILLERUELO20102786}]. In \cite{Mullin}, the authors introduced the concept of a relative anti-closure property for subsets to study the mathematical model of the biological concept of mutation. This anti-closure property is connected to $r$-closed when $r$ is zero. Datskovsky in \cite{DATSKOVSKY2003193} discussed $r$-values over complete residue modulo $n$ as the number of Schur triples. The minimum number of additive tuples in groups of prime order is studied in 2019 by Ostap et al. \cite{Chervak2017}.

	For a subset $A$ of $\mathbb{F}_{2^n}\backslash\{0\}$, let $\mathcal{R}$ be the set of all 3-subsets of $A$ such that in each 3-subset sum of two elements is equal to the remaining one element. If $a,b,c\in A$ with $a+b=c$ then the triplet $\{a,b,c\}$ contribute 6 to $r(A)$. If another triplet $\{d,e,f\}$ contributing to $r(A)$, then the intersection of these two triplets can be atmost a singleton set. Thus $(A,\mathcal{R})$ is a partial Steiner triple system of order $|A|$ [see \cite{steiner2020}]. This interlink motivated to explore $r$-values of subsets of $\mathbb{F}_{2^n}$. 
	
	Section \ref{preliminary} covers the basic results and observations of $r$-values over subsets of a finite additive group. Section \ref{r 2_n} illustrates the results which generate the spectrum of $r$-values of $\mathbb{F}_{2^3}$. Section \ref{Spec 2_n} presents the results for $n\geq3$, $r$-values in $\mathbb{F}_{2^{n+1}}$ is generated using $r$-values in $\mathbb{F}_{2^n}$. 
	
	\section{Preliminary}\label{preliminary}
	In this section, we will provide some definitions and fundamental results. In addition, we present a few basic results that we established.
	
	\begin{definition}\label{D2}
		\cite{Nithish} For any subsets $ A,B,C$ of a finite additive group $ G $, we define $ r(A,B,C) $ as the cardinality of the set $ \{(a,b)~|a\in A,~b\in B,~a+b\in C\} $.
	\end{definition}
	If $ A=B $, then we denote $ r(A,B,C)=r(A,C) $ and if $ A=B=C $ then $ r(A,B,C)=r(A) $, which reduce to $ r $-value of $ A $. 
	
	\par If $A$, $B$ and $C$ are three disjoint subsets of an additive abelian group $G$, using  the property $r(A,B,C)=r(B,A,C)$ we have $r(A\cup B)=r(A)+r(A,B)+2r(A,B,A)+2r(A,B,B)+r(B,A)+r(B)$.
	
	In the above definition, we call the set associated with $ r(A,B,C) $ as $R$-set of $A,B,C$. 
	\begin{definition}\label{D1}
		For any subsets $ A,B,C$ of an additive group $ G $, we define $R$-set of $A,B,C$ denoted by $R(A,B,C)$ as the set $\{(a,b,c)\in A\times B\times C~|~a+b=c\} $.
	\end{definition}
	Note that $ r(A,B,C) =|R(A,B,C)|$. The following result gives a bound of $ r(A,B,C)$.
	
	\begin{prop}
		Let $A$, $B$ and $C$ are three subsets of a finite additive abelian group $G$ with $|A|\leq|B|$. Then $r(A,B,C)\leq|A|\cdot min\{|B|,|C|\}$.
	\end{prop}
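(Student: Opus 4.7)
The plan is a short, one-variable counting argument: fix $a \in A$, bound the number of admissible $b \in B$, and sum over $a$. First I would introduce, for each fixed $a \in A$, the translation map $\phi_a \colon B \to G$ given by $\phi_a(b) = a + b$. Since $G$ is a group, $\phi_a$ is injective, so the set of admissible partners
\[
\{b \in B : a+b \in C\} = \phi_a^{-1}(C)
\]
has the same cardinality as $(a+B) \cap C$.

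Next I would bound this intersection two ways. Being contained in $a+B$, it has size at most $|a+B| = |B|$; being contained in $C$, it has size at most $|C|$. Therefore the number of $b \in B$ with $a+b \in C$ is at most $\min\{|B|,|C|\}$. Summing the fiber bounds over $a \in A$ gives
\[
r(A,B,C) \;=\; \sum_{a \in A} \bigl|\{b \in B : a+b \in C\}\bigr| \;\le\; |A| \cdot \min\{|B|,|C|\},
\]
which is the claimed inequality.

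There is no real obstacle; the argument is elementary and uses only the group structure (invertibility of translation) together with the trivial fact that $|X \cap Y| \le \min\{|X|,|Y|\}$. The hypothesis $|A| \le |B|$ is not actually required for the displayed inequality itself; it presumably serves to fix notation so that $|A|$ plays the role of the outer factor, consistent with the symmetry $r(A,B,C) = r(B,A,C)$ noted earlier in the preliminaries.
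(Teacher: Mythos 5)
Your argument is correct and is essentially the paper's own proof: both fix $a\in A$, use injectivity of translation to see that the $|B|$ sums $a+b$ are distinct and hence at most $\min\{|B|,|C|\}$ of them land in $C$, and then sum over $a$. Your phrasing via $(a+B)\cap C$ merely packages the paper's two cases ($|C|\leq|B|$ versus $|B|<|C|$) into a single $\min$, and your side remark that the hypothesis $|A|\leq|B|$ is not actually needed for the inequality is accurate.
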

	\begin{proof}
		For a fixed $a$ in $A$, there are $|B|$ number of pairs $(a,b)$ with $b\in B$ giving $|B|$ distinct elements $a+b$ of $G$. Suppose $|C|\leq|B|$, then among $|B|$ distinct elements atmost $|C|$ elements lies in $C$. Therefore, $r(\{a\},B,C)\leq|C|$ and hence $r(A,B,C)\leq|A||C|$. Similarly suppose $|B|<|C|$, we have $r(A,B,C)\leq|A||B|$.
	\end{proof}
	
	The following result connect the $r$-value of a set and its compliment in an additive abelian group $G$.
	
	\begin{thm}\label{compliment}
		\cite{HUCZYNSKA2009831} Let $ G $ be a finite abelian group of order $ g $. Let $ k $ be a positive integer with $ 0\leq k\leq g $, and let $ A $
		be a subset of $ G $ of size $k $. Let $ \overline{A} $ be the complement of $ A $ in $ G $. Then $ r(A)+r(\overline{A})=g^2-3gk+3k^2 $.
	\end{thm}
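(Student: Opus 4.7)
The plan is to encode both $r$-values as sums over $G \times G$ of products of indicator functions and then expand the complement side by inclusion--exclusion. Let $\chi : G \to \{0,1\}$ denote the indicator function of $A$, so that $1 - \chi$ is the indicator of $\overline{A}$. By Definition \ref{D1} (with $A=B=C$),
\[
r(A) = \sum_{(a,b) \in G \times G} \chi(a)\chi(b)\chi(a+b), \qquad r(\overline{A}) = \sum_{(a,b) \in G \times G} \bigl(1-\chi(a)\bigr)\bigl(1-\chi(b)\bigr)\bigl(1-\chi(a+b)\bigr).
\]

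I would expand the product in the second sum into the eight standard inclusion--exclusion terms and evaluate each one. This reduces the problem to a handful of marginal sums over $G \times G$: the constant sum $g^2$, three one-factor sums $\sum \chi(a)$, $\sum \chi(b)$, $\sum \chi(a+b)$, three two-factor sums $\sum \chi(a)\chi(b)$, $\sum \chi(a)\chi(a+b)$, $\sum \chi(b)\chi(a+b)$, and the triple-factor sum, which is exactly $r(A)$.

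The main point to verify is that each one-factor sum equals $gk$ and each two-factor sum equals $k^2$. This is where the group structure enters: for each fixed $a \in G$ the map $b \mapsto a+b$ is a bijection of $G$, hence $\sum_b \chi(a+b) = k$, giving $\sum_{a,b}\chi(a+b) = gk$ and $\sum_{a,b} \chi(a)\chi(a+b) = \sum_a \chi(a)\cdot k = k^2$; the remaining marginal sums are analogous, and $\sum_{a,b}\chi(a)\chi(b) = k^2$ is immediate. Substituting these values and collecting terms yields $r(\overline{A}) = g^2 - 3gk + 3k^2 - r(A)$, which rearranges to the claimed identity.

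No real obstacle is anticipated: the argument is a routine inclusion--exclusion, and the only substantive observation is the shift-invariance above that forces each pairwise sum to equal $k^2$ irrespective of which two of the three variables $a$, $b$, $a+b$ are involved. If one prefers a more combinatorial packaging, the same bookkeeping can be organized via the eight counts $f(X,Y,Z) := |\{(x,y) \in X \times Y : x+y \in Z\}|$ for $X,Y,Z \in \{A,\overline{A}\}$ and the three families of marginal identities $\sum_{Z} f(X,Y,Z) = |X||Y|$, $\sum_{X} f(X,Y,Z) = |Y||Z|$, $\sum_{Y} f(X,Y,Z) = |X||Z|$; solving this linear system isolates $f(A,A,A) + f(\overline{A},\overline{A},\overline{A})$ and produces the same answer.
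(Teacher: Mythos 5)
Your proof is correct and is essentially the paper's own argument in different notation: the paper obtains the identity by specializing relation 4 of Corollary \ref{cor comp 3}, which is itself derived from the marginal identities $r(A,B,C)+r(A,B,\overline{C})=|A||B|$ and $r(A,B,C)+r(\overline{A},B,C)=|B||C|$ of Theorem \ref{comp in 3} --- exactly the linear system you describe in your closing remark. Your direct indicator-function expansion and the paper's chained complement identities are the same inclusion--exclusion over membership of $a$, $b$, $a+b$ in $A$ versus $\overline{A}$, resting on the same shift-invariance observation.
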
 
	The proof of this result follows from the proof of 4 in Corollary \ref{cor comp 3}. The following Theorem and Corollary connect the relation of $r$-values of sets and its compliment with respect to the Definition \ref{D2}. The relation 1 of Theorem \ref{comp in 3} gives the partition of $A\times B$ with the sum of each pair either lies in $C$ or not with respect to the set $R(A,B,C)$ and its $r$-values. In relation 2, $G\times B$ is partitioned into two sets, in which for each pair with sum in $C$ either first element in $A$ or not.
	
	\begin{thm}\label{comp in 3}
		Let $A$, $B$, and $C$ are three subsets of a finite additive abelian group $G$. Let $\overline{A}$ and $\overline{C}$ are compliments of $A$ and $C$ respectively. Then
		\begin{enumerate}
			\item $r(A,B,C)+r(A,B,\overline{C})=|A||B|$.
			\item $r(A,B,C)+r(\overline{A},B,C)=|B||C|$.
		\end{enumerate}
	\end{thm}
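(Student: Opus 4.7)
The plan is to prove both identities by a simple partition-counting argument, using the definition $r(A,B,C)=|R(A,B,C)|=|\{(a,b)\in A\times B \mid a+b\in C\}|$ directly. No structural properties of $G$ beyond closure and the bijectivity of translation are needed.

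For part 1, I would begin with the full set $A\times B$, whose cardinality is obviously $|A||B|$. For any pair $(a,b)\in A\times B$, the sum $a+b$ is a well-defined element of $G$, so exactly one of $a+b\in C$ or $a+b\in \overline{C}$ holds. This gives a disjoint partition
\[
A\times B \;=\; \{(a,b)\in A\times B \mid a+b\in C\}\;\sqcup\;\{(a,b)\in A\times B \mid a+b\in \overline{C}\},
\]
and taking cardinalities yields $|A||B|=r(A,B,C)+r(A,B,\overline{C})$.

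For part 2, I would instead count pairs $(g,b)\in G\times B$ with $g+b\in C$. For each fixed $b\in B$, the map $g\mapsto g+b$ is a bijection of $G$, so exactly $|C|$ choices of $g$ yield $g+b\in C$; summing over $b$ gives $|B||C|$ such pairs in total. Now partition this collection according to whether the first coordinate lies in $A$ or in $\overline{A}=G\setminus A$:
\[
\{(g,b)\in G\times B \mid g+b\in C\} \;=\; R(A,B,C)\;\sqcup\;R(\overline{A},B,C).
\]
Taking cardinalities produces $|B||C|=r(A,B,C)+r(\overline{A},B,C)$, as required.

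I do not anticipate any real obstacle here; both identities are essentially bookkeeping based on a two-block partition of a set whose size is counted in a different way. The only mild subtlety is recognizing the correct ambient set for each count (namely $A\times B$ in part 1 versus $G\times B$ in part 2), and using that translation by a fixed $b$ is a bijection of $G$ in part 2 to get the clean total $|B||C|$.
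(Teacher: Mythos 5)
Your proof is correct and follows essentially the same route as the paper: both parts rest on splitting a set of pairs into two disjoint blocks according to membership of the sum in $C$ versus $\overline{C}$ (part 1) or of the first coordinate in $A$ versus $\overline{A}$ (part 2). The only difference is that you explicitly justify the count $|R(G,B,C)|=|B||C|$ via the translation bijection $g\mapsto g+b$, a detail the paper compresses into ``similarly''; this is a welcome bit of added care but not a different argument.
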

	\begin{proof}
		Given $C$  and $\overline{C}$ are compliments, we have $R(A,B,C)\cap R(A,B,\overline{C})=\emptyset$ and $R(A,B,C)\cup R(A,B,\overline{C})=R(A,B,G)$. Therefore, $r(A,B,C)+r(A,B,\overline{C})=|R(A,B,C)|+|R(A,B,\overline{C})|=|R(A,B,G)|=r(A,B,G) =|A||B|$. Similarly for $A$ and its compliment $\overline{A}$ we get $r(A,B,C)+r(\overline{A},B,C)=|B||C|$.
	\end{proof}
	\begin{cor}\label{cor comp 3}
		Let $A$, $B$, and $C$ are three subsets of a finite additive abelian group $G$. Let $\overline{A}$, $\overline{B}$ and $\overline{C}$ are compliments of $A$, $B$ and $C$ in $G$ respectively. Then
		\begin{enumerate}
			\item $r(A,B,C)+r(A,\overline{B},C)=|A||C|$.
			\item $r(A,B,C)-r(\overline{A},B,\overline{C})=(|A|-|\overline{C}|)|B|$.
			\item $r(A,B,C)-r(\overline{A},\overline{B},C)=(|B|-|\overline{A}|)|C|$.
			\item $r(A,B,C)+r(\overline{A},\overline{B},\overline{C})=|B||C|-|\overline{A}||C|+|\overline{A}||\overline{B}|$.
			
		\end{enumerate}
	\end{cor}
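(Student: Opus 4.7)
The plan is to derive all four identities from Theorem \ref{comp in 3} by repeated application, together with the obvious symmetry $r(A,B,C)=r(B,A,C)$ which follows from the commutativity of $G$. No counting from scratch is needed: each statement simply ``flips'' one or two of the sets $A,B,C$ to its complement, and Theorem \ref{comp in 3} already tells us how the $r$-value changes when a single one of them is flipped, so we can build up the identities by flipping one coordinate at a time.

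First I would prove (1) directly. Swapping the first two arguments in Theorem \ref{comp in 3}(2) and using $r(A,B,C)=r(B,A,C)$ gives $r(A,B,C)+r(A,\overline{B},C)=|A||C|$.

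For (2), I would start from Theorem \ref{comp in 3}(1), which gives $r(A,B,\overline{C})=|A||B|-r(A,B,C)$, and then apply Theorem \ref{comp in 3}(2) to the triple $(A,B,\overline{C})$ to get $r(\overline{A},B,\overline{C})=|B||\overline{C}|-r(A,B,\overline{C})$. Substituting the first into the second and rearranging produces
\[
r(A,B,C)-r(\overline{A},B,\overline{C})=|A||B|-|B||\overline{C}|=(|A|-|\overline{C}|)|B|.
\]
Statement (3) is handled symmetrically: use (1) to express $r(A,\overline{B},C)$ in terms of $r(A,B,C)$, and then apply Theorem \ref{comp in 3}(2) to the triple $(A,\overline{B},C)$; after simplification the right-hand side becomes $(|A|-|\overline{B}|)|C|=(|B|-|\overline{A}|)|C|$, since $|A|+|B|=|\overline{A}|+|\overline{B}|$ in any finite group.

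Finally, for (4) I would apply Theorem \ref{comp in 3}(1) to the triple $(\overline{A},\overline{B},C)$, obtaining $r(\overline{A},\overline{B},\overline{C})=|\overline{A}||\overline{B}|-r(\overline{A},\overline{B},C)$, and then substitute the expression for $r(\overline{A},\overline{B},C)$ coming from (3). Adding $r(A,B,C)$ to both sides and collecting terms gives the claimed $|B||C|-|\overline{A}||C|+|\overline{A}||\overline{B}|$. There is no real obstacle here; the only thing to be careful about is bookkeeping of which coordinate is being flipped at each step, so I would present the four parts in the order $(1)\to(2)\to(3)\to(4)$ so that each builds on the previous one. Theorem \ref{compliment} is then recovered as the special case $A=B=C$ of (4), since $|\overline{A}||\overline{C}|$ and $|\overline{A}||\overline{B}|$ collapse and one gets $r(A)+r(\overline{A})=g^{2}-3gk+3k^{2}$.
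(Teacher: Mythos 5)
Your proposal is correct and follows essentially the same route as the paper: part (1) via the symmetry $r(A,B,C)=r(B,A,C)$ together with Theorem \ref{comp in 3}(2), parts (2)--(4) by flipping one set at a time and chaining the two identities of Theorem \ref{comp in 3}, with (4) obtained exactly as in the paper by applying Theorem \ref{comp in 3}(1) to $(\overline{A},\overline{B},C)$ and substituting part (3). One small slip worth fixing: the reason $(|A|-|\overline{B}|)|C|=(|B|-|\overline{A}|)|C|$ is that $|A|+|\overline{A}|=|B|+|\overline{B}|=|G|$, not that $|A|+|B|=|\overline{A}|+|\overline{B}|$ (which is false in general); the conclusion is unaffected.
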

	\begin{proof}
		\begin{enumerate}
			\item We have  $r(A,B,C)+r(A,\overline{B},C)=r(B,A,C)+r(\overline{B},A,C)$. Using 2 of Theorem \ref{comp in 3}, we have $r(B,A,C)+r(\overline{B},A,C)=|A||C|$.
			\item Using 2 of Theorem \ref{comp in 3} in $r(A,B,\overline{C})$, we have $r(A,B,\overline{C})+r(\overline{A},B,\overline{C})=|B||\overline{C}|$. Now combining $r(A,B,\overline{C})+r(\overline{A},B,\overline{C})=|B||\overline{C}|$ and 1 of Theorem \ref{comp in 3}, we get $r(A,B,C)-r(\overline{A},B,\overline{C})=(|A|-|\overline{C}|)|B|$.
			\item Similarly follows as above.
			\item Using 1 of Theorem \ref{comp in 3}, we have $r(\overline{A},\overline{B},C)+r(\overline{A},\overline{B},\overline{C})=|\overline{A}||\overline{B}|$. Substituting for $r(\overline{A},\overline{B},C)$ from 3, we get $r(A,B,C)+r(\overline{A},\overline{B},\overline{C})=|B||C|-|\overline{A}||C|+|\overline{A}||\overline{B}|$.
		\end{enumerate}
	\end{proof}
	Note that the relation in 4 of Corollary \ref{cor comp 3} is further reduced to $r(A,B,C)+r(\overline{A},\overline{B},\overline{C})=|G|^2-(|A|+|B|+|C|)|G|+|A||B|+|A||C|+|B||C|$. 
	
	As mentioned by Sophie et al. in \cite{HUCZYNSKA2009831}, empty sets are considered zero valued sets.  Singleton sets and subgroups have $r$-values as follows.
	
	\begin{prop}\label{size 1}
		\cite{HUCZYNSKA2009831} Let $A$ be a subset of a finite abelian group with $|A|=k$.
		\begin{enumerate}
			\item If $k=1$, then $r(A)=\begin{cases}
				1 & \text{if } A=\{0\},\\
				0& \text{otherwise.}
			\end{cases}$
			\item   $A$ is a subgroup of $G$ if and only if $r(A)=k^2$.
		\end{enumerate}
	\end{prop}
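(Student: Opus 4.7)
For part 1, my plan is to simply enumerate. If $A=\{a\}$ with $|A|=1$, then the only ordered pair in $A\times A$ is $(a,a)$, and it contributes to $r(A)$ iff $a+a\in A$, i.e.\ $2a=a$, which forces $a=0$. Hence $r(A)=1$ when $A=\{0\}$ and $r(A)=0$ otherwise.

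For part 2, the starting observation is the trivial bound $r(A)\le |A\times A|=k^2$, since $r(A)$ counts a subset of the $k^2$ ordered pairs in $A\times A$. Equality $r(A)=k^2$ therefore holds precisely when every ordered pair $(a,b)\in A\times A$ satisfies $a+b\in A$, i.e.\ exactly when $A$ is closed under the group operation. So the two directions reduce to:

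\begin{enumerate}
    \item[($\Rightarrow$)] If $A$ is a subgroup then $A$ is closed under $+$, so every one of the $k^2$ ordered pairs contributes, giving $r(A)=k^2$.
    \item[($\Leftarrow$)] If $r(A)=k^2$ then $A$ is closed under $+$; and since $G$ is finite and (implicitly) $A$ is nonempty (because $k\ge 1$), closure under the operation is enough to make $A$ a subgroup.
\end{enumerate}

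The only point that needs a sentence of justification is the last one: in a finite additive group, for any $a\in A$ the sequence $a,2a,3a,\ldots$ must eventually repeat, so some positive multiple $na$ equals $0$; since $A$ is closed under addition, all of these multiples lie in $A$, which gives $0\in A$ and $-a=(n-1)a\in A$. Thus $A$ contains the identity and inverses, so it is a subgroup. This finite-group step is the only place where care is required; everything else is a direct counting observation.
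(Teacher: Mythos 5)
Your proof is correct. The paper itself gives no proof of this proposition --- it is quoted from the cited reference \cite{HUCZYNSKA2009831} --- so there is nothing in the text to compare against; your argument is the standard one, and you correctly identify and justify the only nontrivial step, namely that a nonempty subset of a finite group closed under the operation already contains $0$ and inverses. The sole caveat is the degenerate case $k=0$: since the paper declares $r(\emptyset)=0=0^2$ while $\emptyset$ is not a subgroup, the equivalence in part 2 tacitly assumes $k\geq 1$, exactly as you note.
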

	
	
	\section{$r$-values of subsets of $\mathbb{F}_{2^n}$}\label{r 2_n}
	Consider a finite field $\mathbb{F}_{2^{n-1}}$, where $n$ is a positive integer. Let $[\alpha_1,\alpha_2,\dots,\alpha_{n-1}]$ and $[\beta_1,\beta_2,\dots,\beta_{n-1}]$ be two elements of $\mathbb{F}_{2^{n-1}}$, where each $\alpha_i,\beta_i\in\{0,1\}$. If $[\alpha_1,\alpha_2,\dots,\alpha_{n-1}]+[\beta_1,\beta_2,\dots,\beta_{n-1}]=[\gamma_1,\gamma_2,\dots,\gamma_{n-1}] $ in $\mathbb{F}_{2^{n-1}}$, then $[0, \alpha_1,\alpha_2,\dots,\alpha_{n-1}]+[0,\beta_1,\beta_2,\dots,\beta_{n-1}]=[0,\gamma_1,\gamma_2,\dots,\gamma_{n-1}] $ in $\mathbb{F}_{2^n}$. Using this, for a subset $A$ of $\mathbb{F}_{2^{n-1}}$, if every element of $A$ is prefixed with $0$ and calculate its $r$-value in $\mathbb{F}_{2^n}$ is same as $r$-value of $A$ in $\mathbb{F}_{2^{n-1}}$. Thus throughout the discussion we treat $\mathbb{F}_{2^{n-1}}$ as subset of $\mathbb{F}_{2^n}$.
	
	
	Let $A$ be subset of $\mathbb{F}_{2^n}$, where $n$ is a positive integer.
	The following result gives the lower bound of $r$-values for a subset of $\mathbb{F}_{2^n}$ which contains zero element. 
	\begin{prop}\label{bound}
		If $A\subseteq \mathbb{F}_{2^n}$ and $0\in A$, then $r(A)\geq 3|A|-2$.
		
	\end{prop}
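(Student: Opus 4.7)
The plan is to exhibit $3|A|-2$ explicit ordered pairs $(a,b)\in A\times A$ whose sums lie in $A$, and to verify that these pairs are distinct; since $r(A)$ counts all such pairs, this immediately yields the lower bound. The key feature of $\mathbb{F}_{2^n}$ that makes this work is characteristic $2$: every element is its own additive inverse, so $a+a=0$ for all $a$.

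Concretely, I would partition the proposed pairs into three families, each exploiting the hypothesis $0\in A$:
\begin{enumerate}
    \item The pairs $(0,a)$ for $a\in A$: here $0+a=a\in A$, giving $|A|$ pairs.
    \item The pairs $(a,0)$ for $a\in A\setminus\{0\}$: here $a+0=a\in A$, giving $|A|-1$ pairs.
    \item The pairs $(a,a)$ for $a\in A\setminus\{0\}$: here $a+a=0\in A$ (using characteristic $2$), giving another $|A|-1$ pairs.
\end{enumerate}
The three families are pairwise disjoint: pairs in the first family have first coordinate $0$; pairs in the second family have first coordinate nonzero and second coordinate $0$; pairs in the third family have both coordinates equal and nonzero. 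Adding the counts gives $|A|+(|A|-1)+(|A|-1)=3|A|-2$, so $r(A)\geq 3|A|-2$.

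There is no real obstacle here; the content is that the characteristic-$2$ hypothesis lets the diagonal pairs $(a,a)$ contribute automatically once $0$ lies in $A$, which is exactly what pushes the naive bound $2|A|-1$ (obtainable in any abelian group from the first two families alone) up to $3|A|-2$. One small sanity check worth noting is that this bound is tight: for $A=\{0,x\}$ with $x\neq 0$, the only valid pairs are $(0,0),(0,x),(x,0),(x,x)$, so $r(A)=4=3|A|-2$.
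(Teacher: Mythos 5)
Your proof is correct and follows essentially the same approach as the paper: both exhibit the pairs $(0,a)$, $(a,0)$, $(a,a)$ and count $3|A|-2$ of them, with your version merely making explicit the disjointness check and the handling of the triply-counted pair $(0,0)$ that the paper's terser argument leaves implicit.
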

	\begin{proof}
		For all $a\in A$, we have $a+0=a$, $0+a=a$ and $a+a=0$. Hence contributing $3|A|-2$ pairs to $r(A)$.
	\end{proof}
	The proposition \ref{size 2} and \ref{3_subsset} gives the spectrum of $r$-values of subsets of size 2 and 3 in $\mathbb{F}_{2^n}$.
	\begin{prop}\label{size 2}
		If $A\subseteq \mathbb{F}_{2^n}$ with $|A|=2$, say $A=\{a,b\}$ then $r(A)=\begin{cases}
			0 & \text{if } a\neq 0\text{ and }b\neq 0\\
			4& \text{otherwise.}
		\end{cases}$
	\end{prop}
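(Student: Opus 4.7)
The plan is to split into two cases according to whether $0\in A$ or $0\notin A$, and in each case to bound $r(A)$ both above and below.

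First I would handle the case $0\in A$, say $a=0$ (without loss of generality). By Proposition \ref{bound}, $r(A)\ge 3|A|-2=4$. On the other hand the trivial upper bound gives $r(A)\le|A|^2=4$, since $r(A)$ counts a subset of the $|A|^2$ ordered pairs in $A\times A$. These two bounds force $r(A)=4$, matching the ``otherwise'' branch.

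Next I would treat the case $a\ne 0$ and $b\ne 0$. Here I would directly examine each of the four ordered pairs $(a,a)$, $(b,b)$, $(a,b)$, $(b,a)$. For the two diagonal pairs, characteristic $2$ gives $a+a=0$ and $b+b=0$; since $0\notin A$ under our assumption, neither pair contributes. For $(a,b)$ (and symmetrically $(b,a)$), the sum $a+b$ would have to be either $a$ or $b$ to lie in $A$; the first forces $b=0$ and the second forces $a=0$, both contradicting the case hypothesis. Hence no ordered pair contributes and $r(A)=0$.

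There is no real obstacle here; the statement is essentially a direct verification. The only subtlety worth highlighting is the use of characteristic $2$ to ensure $x+x=0$ for every $x\in A$, which is what prevents the $|A|=2$ spectrum from taking any intermediate value between $0$ and $4$.
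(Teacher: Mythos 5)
Your proposal is correct and follows essentially the same route as the paper: the same case split on whether $0\in A$, the same use of Proposition \ref{bound} together with the trivial upper bound $r(A)\le|A|^2=4$ in the first case, and the same direct check of the four ordered pairs using $x+x=0$ in the second. Nothing to add.
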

	\begin{proof}
		Suppose $a\neq 0$ and $b\neq 0$. We have $a+a=0$ and $b+b=0$, but then $0\not\in A$. Also $a+b=a$ implies $b=0$, which is not possible. similarly $a+b=b$ is not possible and hence $a+b\not\in A$.\\
		Suppose $a=0$, then by Proposition \ref{bound}, we have $3\cdot2-2\leq r(A)\leq 4$ giving $r(A)=4$.
	\end{proof}
	\begin{prop}\label{3_subsset}
		If $A\subseteq \mathbb{F}_{2^n}$ with $|A|=3$, then $r(A)$ belongs to $\{0,6,7\}$.
	\end{prop}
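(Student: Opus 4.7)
The plan is to split into cases based on whether $0 \in A$, exploiting that in characteristic 2 we have $x+x=0$ for every $x$, and that $a+b=c \iff a+c=b \iff b+c=a$ for distinct $a,b,c$.

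First I would handle the case $0 \notin A$, writing $A = \{a,b,c\}$ with all three elements nonzero and distinct. Diagonal pairs $(x,x)$ contribute nothing since $x+x = 0 \notin A$, so $r(A)$ is determined by the six off-diagonal pairs. For the pair $(a,b)$, note that $a+b \neq 0$ (as $a \neq b$), $a+b \neq a$ (as $b \neq 0$), and $a+b \neq b$ (as $a \neq 0$); hence $a+b \in A$ if and only if $a+b = c$. By the characteristic-2 equivalence above, $a+b=c$, $a+c=b$, and $b+c=a$ all hold or all fail simultaneously. In the first subcase all six off-diagonal pairs contribute, giving $r(A)=6$; in the second subcase none do, giving $r(A)=0$.

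Next I would handle the case $0 \in A$, writing $A = \{0,b,c\}$ with $b,c$ nonzero and distinct. Proposition \ref{bound} immediately yields $r(A) \geq 3|A|-2 = 7$. The only pairs not already counted in that lower bound are $(b,c)$ and $(c,b)$, and the same exclusion argument as above shows $b+c \notin \{0,b,c\} = A$. Therefore exactly $r(A) = 7$.

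Combining the two cases, $r(A) \in \{0, 6, 7\}$, as claimed. I do not anticipate a substantive obstacle; the only subtle point is making explicit why, when $0 \notin A$, the sum of any two distinct nonzero elements of $A$ cannot equal $0$ or either of the two summands, so that membership in $A$ reduces to equaling the third element—after which the characteristic-2 symmetry gives the all-or-nothing dichotomy.
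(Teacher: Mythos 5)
Your proposal is correct and follows essentially the same route as the paper: split on whether $0\in A$, show that for distinct nonzero elements the sum can only land in $A$ by equaling the third element, and use the characteristic-2 symmetry $a+b=c \Leftrightarrow a+c=b \Leftrightarrow b+c=a$ to get the all-or-nothing count of $6$, with the $0\in A$ case giving exactly $7$. The only cosmetic difference is that you invoke Proposition \ref{bound} for the lower bound $7$ where the paper counts those pairs directly.
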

	\begin{proof}
		Suppose $0\in A$, say $A=\{0,b,c\}$. Note that if $b+c=0$, then $b=c$, which is not possible. Similarly if $b+c=b$ or $b+c=c$ then $c=0$ or $b=0$. Thus $b+c\notin A$, hence in this case $r(A)=7$.
		
		Suppose $0\notin A$. Say $A=\{a,b,c\}$. Using the Cayley table below,\\
		\begin{center}
			\begin{tabular}{c||c|c|c}
				& a & b & c \\
				\hline
				\hline
				a & 0 & a+b & a+c \\
				\hline
				b & b+a & 0 & b+c \\
				\hline
				c & c+a & c+b & 0 \\
			\end{tabular}
		\end{center}
		
		if $a+b$, $a+c$ and $b+c$ are not in $A$, then $r(A)=0$. Suppose $a+b\in A$, then by above arguments $a+b\neq a$ and $a+b\neq b$. But then $a+b=c$, which also implies $a+c=b$ and $b+c=a$. Hence $r(A)=6$.
	\end{proof}
	\begin{thm}\label{multiple_of_6}
		For any subsets $A$ of $\mathbb{F}_{2^n}\backslash\{0\}$, $r(A)\equiv0\pmod 6$.
	\end{thm}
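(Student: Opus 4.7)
The plan is to show that when $0 \notin A$, every ordered pair counted by $r(A)$ sits inside a $3$-element subset $\{a,b,c\} \subset A$ satisfying $a+b+c=0$, and that each such subset contributes exactly $6$ to $r(A)$. Then $r(A)$ is $6$ times the number of such triples, which gives the congruence.

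First I would rule out diagonal contributions. In characteristic $2$ we have $a+a=0$ for every $a$, so a pair $(a,a)$ with $a \in A$ contributes to $r(A)$ only if $0 \in A$; since $0 \notin A$ by hypothesis, every pair $(a,b)$ contributing to $r(A)$ must have $a \neq b$.

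Next I would analyse a generic contributing pair. Suppose $(a,b) \in A \times A$ with $a \neq b$ and $c := a+b \in A$. I would note that $c \neq a$, because $c=a$ would force $b=0$, and similarly $c \neq b$ would force $a=0$; both are impossible since $0 \notin A$. Hence $\{a,b,c\}$ is a genuine $3$-element subset of $A$. Using $\mathrm{char}=2$ again, the identity $a+b+c=0$ also yields $a+c=b$ and $b+c=a$, so every one of the six ordered pairs
$(a,b),(b,a),(a,c),(c,a),(b,c),(c,b)$ contributes to $r(A)$.

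Finally I would define $\mathcal{T}=\{T \subseteq A : |T|=3,\ \text{sum of the two smaller elements equals the third (in some order)}\}$, or more cleanly, the set of $3$-subsets $\{a,b,c\}\subseteq A$ with $a+b+c=0$. The preceding step shows each $T \in \mathcal{T}$ accounts for exactly $6$ ordered pairs counted by $r(A)$. Conversely, an ordered pair $(a,b)$ contributing to $r(A)$ determines the triple $\{a,b,a+b\}\in\mathcal{T}$ uniquely, so the $R$-set of $A$ is partitioned into blocks of size $6$ indexed by $\mathcal{T}$. Therefore $r(A)=6|\mathcal{T}|\equiv 0 \pmod 6$.

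The argument is essentially bookkeeping, so there is no real obstacle; the only care needed is the (mild) verification that $a+b \in A$ cannot collide with $a$ or $b$, which is exactly where the hypothesis $0\notin A$ is used. This is the same mechanism underlying the partial Steiner triple system remark in the introduction, and it is worth phrasing the proof in that language to make the $6$-fold multiplicity transparent.
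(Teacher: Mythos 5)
Your proof is correct and follows essentially the same approach as the paper: both decompose the $R$-set of $A$ into blocks of six ordered pairs, one block for each $3$-subset $\{a,b,c\}\subseteq A$ with $a+b+c=0$. Your write-up is in fact somewhat more careful than the paper's, since you explicitly verify that $a$, $b$, $c$ are pairwise distinct (using $0\notin A$) and that each contributing pair determines its triple uniquely, whereas the paper argues instead that two distinct contributing triples share at most one element and hence no ordered pairs.
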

	\begin{proof}
		Suppose $r(A)>0$. Then there exist $\{a,b,c\}$ in $A$ such that $a+b=c$. But then $a+c=b$ and $b+c=a$. Hence by abalian property $\{a,b,c\}$ contribute 6 pairs to $r(A)$. Suppose $\{d,e,f\}$ is another pair which is disjoint from $\{a,b,c\}$, then $\{d,e,f\}$ is also contribute 6 to $r(A)$. If $\{a,b,c\}\cap\{d,e,f\}\neq\emptyset$, then either both are equal or one element is common. If two elements are common, that is $a=d$ and $b=e$, then $c=a+b=d+e=f$. Suppose $a=d$, then again $\{a,e,f\}$ contribute 6 to $r(A)$ without overlapping with the combinations of $\{a,b,c\}$. Hence, the proof. 
	\end{proof}
	The proof of above result ensures that, if $\mathcal{R}=\{set(B)|B\in R(A,A,A)\}$, that is elements of $R$-set of $A$
	is considered as set, then $\mathcal{R}$ is a blocks of $A$ to forms partial Steiner triple system with $|\mathcal{R}|=r(A)/6$.

	\begin{prop}\label{sum_free}
		Let $B=\{a\in\mathbb{F}_{2^n}|Tr(a)=1\}$. Then $r(B)=0$.
	\end{prop}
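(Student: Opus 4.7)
The plan is to exploit the $\mathbb{F}_2$-linearity (in particular, additivity) of the trace map $\mathrm{Tr}: \mathbb{F}_{2^n} \to \mathbb{F}_2$. The set $B$ is defined as the preimage of $1$ under $\mathrm{Tr}$, so membership in $B$ is controlled by a single additive invariant, and the claim should follow by showing that the trace forbids any closure under addition.

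Concretely, I would proceed as follows. First, recall that for every $a, b \in \mathbb{F}_{2^n}$ one has $\mathrm{Tr}(a+b) = \mathrm{Tr}(a) + \mathrm{Tr}(b)$, where the right-hand side is computed in $\mathbb{F}_2$. Next, take any ordered pair $(a,b) \in B \times B$. Then $\mathrm{Tr}(a) = \mathrm{Tr}(b) = 1$, hence
\[
\mathrm{Tr}(a+b) \;=\; 1 + 1 \;=\; 0 \quad \text{in } \mathbb{F}_2.
\]
Consequently $a+b$ lies in the complement of $B$, i.e.\ $a+b \notin B$. Since this holds for every pair $(a,b) \in B \times B$, the defining set of $r(B)$ is empty, giving $r(B) = 0$.

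There is really no substantive obstacle here: the whole argument rests on the single fact that $\mathrm{Tr}$ is an $\mathbb{F}_2$-linear map and that $1+1 = 0$ in characteristic $2$. The only thing worth remarking is that this argument actually proves the stronger statement $r(B, B, B) = 0$ directly from $\mathrm{Tr}(a+b) \ne 1$; no case analysis on whether $0 \in B$ is needed, because $\mathrm{Tr}(0) = 0$ automatically excludes $0$ from $B$.
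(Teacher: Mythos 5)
Your proof is correct and is essentially identical to the paper's: both use the additivity of the trace to get $\mathrm{Tr}(a+b)=1+1=0$ for $a,b\in B$, so $a+b\notin B$ and $r(B)=0$. Your closing remark that $0\notin B$ matches the observation the paper makes immediately after its proof.
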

	\begin{proof}
		Let $a,b\in B$, then $Tr(a+b)=Tr(a)+Tr(b)=1+1=0$, which implies $a+b\notin B$. Thus $r(B)=0$. 
	\end{proof}
	In the above proposition, note that $0\notin B$ and every subset of $B$ is sum-free set.  In other words, for each subset of size $m$ with $0\leq m\leq |B|=2^{n-1}$ there is a subset of $\mathbb{F}_{2^n}$ of zero  $r$-value.
	
	\begin{thm}\label{zero_added}
		For any subsets $A$ of $\mathbb{F}_{2^n}\backslash\{0\}$ with $|A|=l$, we have $r(A\cup\{0\})=r(A)+3l+1$.
	\end{thm}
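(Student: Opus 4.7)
The natural approach is a direct case analysis on the pairs counted by $r(A\cup\{0\})$, partitioning $(A\cup\{0\})\times(A\cup\{0\})$ according to which coordinates are zero. Write $B=A\cup\{0\}$ so that $|B|=l+1$. By Definition \ref{D2}, $r(B)$ counts pairs $(x,y)\in B\times B$ with $x+y\in B$, and every such pair falls into exactly one of the following four types: (i) $x=y=0$, (ii) $x=0$ and $y\in A$, (iii) $x\in A$ and $y=0$, (iv) $x,y\in A$.

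The first step is to handle types (i)--(iii) directly. Type (i) contributes $1$ since $0+0=0\in B$. For type (ii), every $y\in A$ gives $0+y=y\in A\subseteq B$, so this type contributes $l$; by symmetry, type (iii) also contributes $l$.

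The core step is type (iv), which I would split further according to whether $x+y=0$ or $x+y\in A$ (these are disjoint possibilities since $0\notin A$). In characteristic $2$ we have $x+y=0$ iff $y=x$, so the diagonal subcase contributes exactly $l$ pairs $(a,a)$ for $a\in A$. The remaining pairs $(x,y)\in A\times A$ with $x+y\in A$ are counted, by definition, by $r(A)$.

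Combining the four contributions gives $r(A\cup\{0\})=1+l+l+l+r(A)=r(A)+3l+1$, as required. There is no real obstacle beyond careful bookkeeping; the only point requiring the characteristic-$2$ hypothesis is identifying the diagonal as the set of pairs with $x+y=0$, which uses $-x=x$ so that $x+y=0$ forces $y=x$ rather than merely $y=-x$.
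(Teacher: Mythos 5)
Your proof is correct and follows essentially the same route as the paper: the paper expands $r(A\cup\{0\})$ via its general union formula $r(A\cup B)=r(A)+r(A,B)+2r(A,B,A)+2r(A,B,B)+r(B,A)+r(B)$ with $B=\{0\}$, and the six terms it evaluates correspond exactly to your case analysis (the $1$ from $(0,0)$, $l$ from each of $(0,a)$ and $(a,0)$, $l$ from the diagonal pairs with $x+y=0$, and $r(A)$ from the rest). Your observation that characteristic $2$ is what makes the diagonal contribute $l$ matches the paper's remark that each $a$ is its own additive inverse.
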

	\begin{proof}
		We have $r(A\cup\{0\})=r(A)+r(A,\{0\})+2r(A,\{0\},A)+2r(A,\{0\},\{0\})+r(\{0\},A)+r(\{0\})$. Since for any $a\in A$, its additive inverse is $a$ itself, we have $r(A,\{0\})=|A|=l$. Now 0 is the identity element and $0\notin A$, we have $r(A,\{0\},A)=|A|=l$, $r(A,\{0\},\{0\})=0$ and $r(\{0\},A)=0$. Hence $r(A\cup\{0\})=r(A)+3l+1$.
	\end{proof}
	The above theorem help us to calculate the $r$-value of a subset $A$ of $\mathbb{F}_{2^n}$ with $0\in A$ by knowing the $r$-value of $A\backslash\{0\}$. The following result gives the $r$-values of size 4 subsets in $\mathbb{F}_{2^n}\backslash\{0\}$. 
	
	\begin{prop}\label{size 4}
		If $A\subseteq \mathbb{F}_{2^n}$ with $|A|=4$ and $0\notin A$, then $r(A)$ is either 0 or 6.
	\end{prop}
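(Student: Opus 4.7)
The plan is to combine two ingredients already available in the paper: the divisibility constraint from Theorem \ref{multiple_of_6} and the partial Steiner triple system structure noted immediately after it. Since $0 \notin A$, Theorem \ref{multiple_of_6} gives $r(A) = 6k$ where $k = |\mathcal{R}|$ is the number of unordered $3$-subsets $\{x,y,z\} \subseteq A$ with $x+y=z$. So the whole proposition reduces to showing $k \in \{0,1\}$, and the bound $r(A) \leq 16$ combined with divisibility by $6$ already forces $k \leq 2$, leaving only the case $k=2$ to rule out.

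To rule out $k=2$, I would use the intersection argument from the proof of Theorem \ref{multiple_of_6}: any two distinct triples $T_1, T_2$ in $\mathcal{R}$ intersect in at most one element. Indeed, if $T_1 = \{a,b,x\}$ and $T_2 = \{a,b,y\}$ share two elements, then $x = a+b = y$, forcing $T_1 = T_2$. Hence two distinct triples satisfy $|T_1 \cup T_2| \geq 3 + 3 - 1 = 5$, but $T_1 \cup T_2 \subseteq A$ and $|A|=4$, a contradiction. So at most one triple exists, giving $k \leq 1$ and therefore $r(A) \in \{0,6\}$.

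No step is genuinely hard here; the only place requiring any care is the clean separation of cases: (a) two triples sharing exactly one element need $5$ elements, and (b) two triples sharing two elements must be equal. Both are immediate from characteristic~$2$ arithmetic. The proposition then follows in a couple of lines, essentially as a direct corollary of Theorem \ref{multiple_of_6} specialized to the size constraint $|A|=4$.
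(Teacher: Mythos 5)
Your proposal is correct and follows essentially the same route as the paper: both arguments reduce to the observation (from the proof of Theorem \ref{multiple_of_6}) that two distinct triples $\{x,y,z\}$ with $x+y=z$ can share at most one element, so two of them would require at least $5$ elements, contradicting $|A|=4$. Your preliminary bound forcing $k\leq 2$ is harmless but unnecessary, since the intersection argument alone already gives $k\leq 1$.
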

	\begin{proof}
		Suppose $A$ is a subset of $B$, where $B$ is as defined in Proposiion \ref{sum_free}. Then $r(A)=0$.  Suppose $r(A)>0$, then using Proposition \ref{3_subsset}, there exits a three elements $\{a,b,c\}$ of $A$ contribute 6 pairs to $r(A)$. 
		If there exists another set $\{d,e,f\}$ then as seen from proof of Theorem \ref{multiple_of_6}, there can be atmost one element common in $\{a,b,c\}$ and $\{d,e,f\}$. Which contradicts $|A|=4$. Hence, the proof.
	\end{proof}
	The following Theorem \ref{Upper_bound} gives the upper bound for the $r$-values of subsets of $\mathbb{F}_{2^n}\backslash\{0\}$.
	\begin{thm}\label{Upper_bound}
		For any subsets $A$ of $\mathbb{F}_{2^n}\backslash\{0\}$ with $|A|=k$, then $r(A)\leq \lfloor\frac{k(k-1)}{6}\rfloor6$. Moreover $r(A)=k(k-1)$ holds if and only if $A\cup\{0\}$ is a subgroup of $\mathbb{F}_{2^n}$.
	\end{thm}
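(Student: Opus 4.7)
The plan is to bound $r(A)$ by a simple counting argument combined with Theorem \ref{multiple_of_6}, and then identify the equality case via Theorem \ref{zero_added} together with Proposition \ref{size 1}.

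For the bound, I would first observe that in characteristic $2$ every element satisfies $a+a=0$; since $0 \notin A$, no diagonal pair $(a,a)$ contributes to $r(A)$. Hence $r(A)$ counts ordered pairs only among the $k(k-1)$ pairs of distinct elements of $A$, giving the trivial bound $r(A) \leq k(k-1)$. Invoking Theorem \ref{multiple_of_6}, which forces $r(A)$ to be a multiple of $6$, the sharpened bound $r(A) \leq 6\lfloor k(k-1)/6 \rfloor$ follows immediately, since the right-hand side is the largest multiple of $6$ not exceeding $k(k-1)$.

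For the ``moreover'' part, I would apply Theorem \ref{zero_added} to $A$, which yields $r(A \cup \{0\}) = r(A) + 3k + 1$. If $r(A) = k(k-1)$, then $r(A \cup \{0\}) = k(k-1) + 3k + 1 = (k+1)^2 = |A \cup \{0\}|^2$, and Proposition \ref{size 1}(2) immediately tells us that $A \cup \{0\}$ is a subgroup of $\mathbb{F}_{2^n}$. Conversely, if $A \cup \{0\}$ is a subgroup of order $k+1$, then Proposition \ref{size 1}(2) gives $r(A \cup \{0\}) = (k+1)^2$, and solving Theorem \ref{zero_added} for $r(A)$ gives $r(A) = (k+1)^2 - 3k - 1 = k(k-1)$.

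No step here feels like a serious obstacle: the bound is a mod-$6$ tightening of the obvious $k(k-1)$ count, and the equality case is handled by shifting the computation from $A$ to $A \cup \{0\}$, where being a subgroup is detected by the square-of-size characterization of Proposition \ref{size 1}. The only sanity check worth making is that a subgroup $A \cup \{0\}$ of $\mathbb{F}_{2^n}$ has order $k+1 = 2^m$, so $k = 2^m - 1$ and $k(k-1) = (2^m-1)(2^m-2)$ is automatically divisible by $6$, which is consistent with the multiple-of-$6$ constraint of Theorem \ref{multiple_of_6}.
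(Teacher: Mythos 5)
Your proof is correct and follows essentially the same route as the paper: the trivial bound $r(A)\leq k(k-1)$ sharpened by the divisibility-by-$6$ result of Theorem \ref{multiple_of_6}, and the equality case reduced via Theorem \ref{zero_added} to the subgroup characterization $r(A\cup\{0\})=(k+1)^2$ of Proposition \ref{size 1}. Your explicit justification that diagonal pairs $(a,a)$ cannot contribute (since $a+a=0\notin A$) makes the ``clearly'' step of the paper's argument precise, which is a welcome addition.
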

	\begin{proof}
		Clearly we have $r(A)\leq k^2-k$. But by Theorem \ref{multiple_of_6}, $r(A)$ is a multiple of 6. Thus $r(A)\leq \lfloor\frac{k(k-1)}{6}\rfloor6$. Moreover, $A\cup\{0\}$ is a subgroup of $\mathbb{F}_{2^n}$ if and only in $r(A\cup\{0\})=(k+1)^2$. By Theorem \ref{zero_added} $r(A\cup\{0\})=(k+1)^2$ if and only if $r(A)=(k+1)^2-(3k+1)=k(k-1)$.
	\end{proof}
	
	\subsection{Spectrum of $r$-values in $\mathbb{F}_{2^3}$}\label{F_2_3}
	The above listed results are enough to generate a spectrum of $r$-values in  $\mathbb{F}_{2^3}$.
	\begin{figure}[h]
		\centering
		\includegraphics[width=0.3\linewidth]{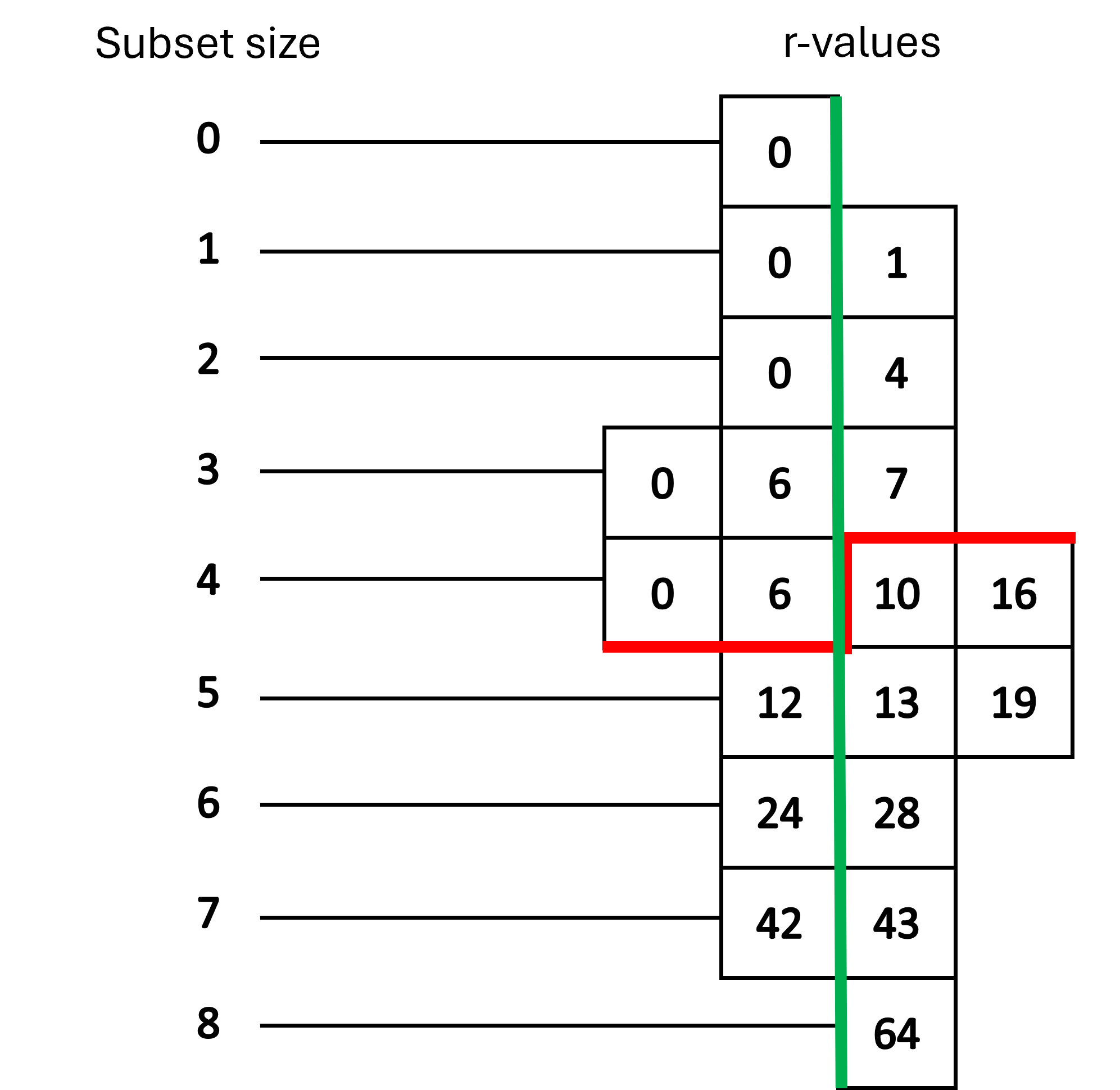}
		\caption{Spectrum of $r$-values in  $\mathbb{F}_{2^3}$}
		\label{fig:table-231}
	\end{figure}
	The Figure \ref{fig:table-231} gives the all possible $r$-values of each subset size. 
	
	The $r$-values of subsets of $\mathbb{F}_{2^3}\backslash\{0\}$ is tabulated in the left side of vertical green coloured line. The values in the right side of vertical green coloured line gives the $r$-values of subsets containing 0. Moreover the right side values are calculated using Theorem \ref{zero_added}. For example to calculate the $r$-values of subsets of size 4 containing 0 is obtained by adding $3\times 4-2=10$ to each $r$-values of the subsets of size 3 in $\mathbb{F}_{2^3}\backslash\{0\}$. The values listed below the red coloured line can be computed using Theorem \ref{compliment}. Thus by knowing $r$-values of subsets of $\mathbb{F}_{2^3}\backslash\{0\}$ of size upto 4, it is possible to generate spectrum of $r$-values for each size in $\mathbb{F}_{2^3}$.
	
	\section{Spectrum of $r$-values in $\mathbb{F}_{2^n}$}\label{Spec 2_n}
	In this section we generate spectrum of $r$-values in $\mathbb{F}_{2^n}$, where $n\geq 4$ using spectrum of $r$-values in $\mathbb{F}_{2^{n-1}}$. As seen in the subsection \ref{F_2_3}, to generate spectrum of $r$-values in $\mathbb{F}_{2^n}$ it is enough to calculate the $r$-values of subsets of $\mathbb{F}_{2^n}\backslash\{0\}$ upto size $2^{n-1}$.

	The following theorem gives the condensed formula for $r$-value of $A\cup B$ in $\mathbb{F}_{2^n}$.
	
	\begin{thm}\label{union}
		Suppose $A\subseteq\mathbb{F}_{2^{n-1}}\backslash\{0\}$ and $B\subseteq\mathbb{F}_{2^n}\backslash\mathbb{F}_{2^{n-1}}$, then $r(A\cup B)=r(A)+3r(A,B,B)$.
	\end{thm}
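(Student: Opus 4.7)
The plan is to start from the general disjoint-union expansion stated in the preliminary section, namely
\[
r(A\cup B)=r(A)+r(A,B)+2r(A,B,A)+2r(A,B,B)+r(B,A)+r(B),
\]
which holds whenever $A$ and $B$ are disjoint, and then exploit the $\mathbb{F}_2$-vector space structure of $\mathbb{F}_{2^n}$ to kill most of the terms. Under the identification used throughout the paper, elements of $\mathbb{F}_{2^{n-1}}$ are exactly those whose leading coordinate is $0$, while elements of $\mathbb{F}_{2^n}\setminus\mathbb{F}_{2^{n-1}}$ are those whose leading coordinate is $1$. Since $A\subseteq\mathbb{F}_{2^{n-1}}\setminus\{0\}$ and $B\subseteq\mathbb{F}_{2^n}\setminus\mathbb{F}_{2^{n-1}}$, the sets $A$ and $B$ are disjoint, so the expansion applies.

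Next, I would read off the first-coordinate behaviour of sums: $a+a'$ and $b+b'$ have leading coordinate $0$, while $a+b$ has leading coordinate $1$. This immediately forces three vanishings: $r(A,B)=r(A,A,B)=0$ because such a sum cannot land in $B$; $r(A,B,A)=0$ because $a+b$ cannot land in $A$; and $r(B)=r(B,B,B)=0$ because $b+b'$ cannot land in $B$. What survives is
\[
r(A\cup B)=r(A)+2r(A,B,B)+r(B,A),
\]
where $r(B,A)=r(B,B,A)$ counts pairs $(b_1,b_2)\in B\times B$ whose sum lies in $A$.

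The key remaining step is to show $r(B,B,A)=r(A,B,B)$. For this I would exhibit an explicit bijection $R(B,B,A)\to R(A,B,B)$ given by $(b_1,b_2)\mapsto(b_1+b_2,\,b_1)$, with inverse $(a,b)\mapsto(b,\,a+b)$. Both directions are well-defined because of characteristic $2$: if $b_1+b_2=a\in A$ then $a+b_1=b_2\in B$, and if $a+b\in B$ then $b+(a+b)=a\in A$. Plugging this equality back gives $r(A\cup B)=r(A)+3r(A,B,B)$, which is the claim.

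None of the steps is genuinely hard; the only place to be careful is the bookkeeping of the six cross-terms in the expansion and the notational convention $r(X,Y)=r(X,X,Y)$ inherited from Definition \ref{D2}. The actual content of the theorem lies in the symmetry bijection used to identify $r(B,B,A)$ with $r(A,B,B)$, which I expect to be the only place where the characteristic-$2$ hypothesis is used non-trivially.
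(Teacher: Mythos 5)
Your proposal is correct and follows essentially the same route as the paper: the same disjoint-union expansion, the same leading-coordinate argument to kill $r(A,A,B)$, $r(A,B,A)$ and $r(B,B,B)$, and the same characteristic-$2$ symmetry to identify $r(B,B,A)$ with $r(A,B,B)$ (the paper phrases it as a chain of equivalences among permuted triples in the $R$-sets rather than your explicit bijection, but it is the same idea).
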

	\begin{proof}
		We have $r(A\cup B)=r(A)+r(A,B)+2r(A,B,A)+2r(A,B,B)+r(B,A)+r(B)$. Since $A\subseteq\mathbb{F}_{2^{n-1}}\backslash\{0\}$ and $B\subseteq\mathbb{F}_{2^n}\backslash\mathbb{F}_{2^{n-1}}$, we observed that sum of two elements in $A$ cannot lies in $B$. Which gives $r(A,B)=0$. In vector representation, the first coordinate of each element of $A$ is $0$, and the first coordinate of each element of $B$ is $1$. Thus, the sum of two elements in $B$ cannot belongs to $B$, and if one element is from $A$ and another element is from $B$, then the sum cannot belongs to $A$. Therefor $r(B)=0$ and $r(A,B,A)=0$.
		
		Since each element is its additive inverse in $\mathbb{F}_{2^n}$ we have,	
		\begin{align*}
			(\alpha,\beta,\gamma)\in R(A,B,B)&\Leftrightarrow(\alpha,\beta,\gamma),(\alpha,\gamma,\beta)\in R(A,B,B)\\
			&\Leftrightarrow(\beta,\alpha,\gamma),(\gamma,\alpha,\beta)\in R(B,A,B)\\
			&\Leftrightarrow(\beta,\gamma,\alpha),(\gamma,\beta,\alpha)\in R(B,B,A)
		\end{align*}
		
		Thus $|R(A,B,B)|=|R(B,A,B)|=|R(B,B,A)|$ and $r(A,B,B)=r(B,A,B)=r(B,A)$. Hence $r(A\cup B)=r(A)+3r(A,B,B)$.
	\end{proof}
	\begin{cor}\label{coro}
		Every subset of $\mathbb{F}_{2^n}\backslash\mathbb{F}_{2^{n-1}}$ has $r$-value zero.
	\end{cor}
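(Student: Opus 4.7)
The plan is to read off the corollary directly from the argument already used in the proof of Theorem \ref{union}. The key observation is the one about vector coordinates: every element of $\mathbb{F}_{2^n} \backslash \mathbb{F}_{2^{n-1}}$ has first coordinate $1$, so the sum of any two such elements has first coordinate $1+1=0$ in characteristic $2$ and therefore lands in $\mathbb{F}_{2^{n-1}}$.

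Concretely, I would fix an arbitrary $B \subseteq \mathbb{F}_{2^n} \backslash \mathbb{F}_{2^{n-1}}$ and take any ordered pair $(b_1,b_2) \in B \times B$. By the coordinate observation, $b_1 + b_2 \in \mathbb{F}_{2^{n-1}}$, but $B$ is disjoint from $\mathbb{F}_{2^{n-1}}$, so $b_1 + b_2 \notin B$. Hence no pair in $B \times B$ contributes to $r(B)$, giving $r(B) = 0$.

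If one prefers to invoke Theorem \ref{union} as a black box, the same conclusion is obtained by setting $A = \emptyset$: the hypotheses $A \subseteq \mathbb{F}_{2^{n-1}} \backslash \{0\}$ and $B \subseteq \mathbb{F}_{2^n} \backslash \mathbb{F}_{2^{n-1}}$ are satisfied trivially, so $r(B) = r(A \cup B) = r(\emptyset) + 3 r(\emptyset, B, B) = 0$.

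There is no real obstacle here; the statement is essentially a side remark extracted from the proof of Theorem \ref{union}, and either presentation above should suffice as a short stand-alone argument.
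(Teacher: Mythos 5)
Your proposal is correct and matches the paper, which simply notes that the corollary ``directly follows from Theorem \ref{union}''; your coordinate argument is exactly the observation already made inside that theorem's proof to show $r(B)=0$, and the $A=\emptyset$ specialization is the formal way to cite the theorem. Either of your two presentations suffices.
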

	The proof of the above Corollary directly follows from the Theorem \ref{union}. The following result relate the $r$-value of a set and its compliment in $\mathbb{F}_{2^n}\backslash\{0\}$.
	\begin{prop}\label{compliment without zero}
		Let $S\subseteq\mathbb{F}_{2^n}\backslash\{0\}$ with size $k$. Then the compliment of $S$ in $\mathbb{F}_{2^n}\backslash\{0\}$ denoted by $\overline{S}^*$ is a subset of size $2^n-1-k$ with $r$-value  given by $r(\overline{S}^*)=2^{2n}-3(2^n-k-1)(k+1)-(3k+1)-r(S)$.
	\end{prop}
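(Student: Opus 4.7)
The plan is to reduce this statement to two results that have already been proved in the paper, namely Theorem \ref{compliment} (which governs the ordinary complement in $\mathbb{F}_{2^n}$) and Theorem \ref{zero_added} (which measures the change in $r$-value when the identity element is adjoined). The key observation is that since $S$ lives in $\mathbb{F}_{2^n}\backslash\{0\}$, the identity lies outside $S$, and therefore the two notions of complement differ by exactly the element $0$: concretely, if $\overline{S}$ denotes the complement of $S$ inside the full group $\mathbb{F}_{2^n}$, then
\[
\overline{S}=\overline{S}^{*}\cup\{0\},\qquad |\overline{S}|=2^n-k,\qquad |\overline{S}^{*}|=2^n-1-k.
\]
Once this identification is made, $r(\overline{S}^{*})$ can be expressed by first writing $r(\overline{S})$ in terms of $r(S)$ and then writing $r(\overline{S})$ in terms of $r(\overline{S}^{*})$.

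First I would apply Theorem \ref{compliment} with $g=2^n$ and $|S|=k$ to obtain
\[
r(\overline{S})=2^{2n}-3\cdot 2^{n}k+3k^{2}-r(S).
\]
Next, since $\overline{S}^{*}\subseteq\mathbb{F}_{2^n}\backslash\{0\}$ has size $2^n-1-k$, Theorem \ref{zero_added} applied to $A=\overline{S}^{*}$ gives
\[
r(\overline{S})=r\bigl(\overline{S}^{*}\cup\{0\}\bigr)=r(\overline{S}^{*})+3(2^n-1-k)+1.
\]

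Finally I would equate the two expressions for $r(\overline{S})$, solve for $r(\overline{S}^{*})$, and verify by a straightforward algebraic simplification that
\[
2^{2n}-3\cdot 2^{n}k+3k^{2}-r(S)-3(2^n-1-k)-1
\]
rearranges into $2^{2n}-3(2^n-k-1)(k+1)-(3k+1)-r(S)$, which is the claimed formula. There is no real obstacle here; the only care needed is the bookkeeping in the final simplification, in particular checking that the cross term $-3\cdot 2^{n}k-3\cdot 2^{n}$ can be collected as $-3\cdot 2^{n}(k+1)$ and that the remaining constants $3k^{2}+3k+2$ agree with the expansion of $3(k+1)^{2}-(3k+1)$, so that the compact form with $(2^n-k-1)(k+1)$ emerges cleanly.
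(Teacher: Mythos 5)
Your proposal is correct and follows essentially the same route as the paper: apply Theorem \ref{compliment} to get $r(\overline{S})$ from $r(S)$, then peel off the element $0$ via the identity $r(\overline{S}^{*}\cup\{0\})=r(\overline{S}^{*})+3(2^{n}-1-k)+1$ and simplify. (If anything, your citation is the more accurate one: the paper attributes this last step to Proposition \ref{bound}, but the identity actually used is that of Theorem \ref{zero_added}, exactly as you invoke it.)
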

	
	\begin{proof}
		Let $S\subseteq\mathbb{F}_{2^n}\backslash\{0\}$ with size $k$ and $\overline{S}$ is the compliment of $S$ in $\mathbb{F}_{2^n}$. Then by Theorem \ref{compliment}, $r(\overline{S})=2^{2n}-3\cdot2^nk+3k^2-r(S)$ with $|\overline{S}|=2^n-k$. Now let $\overline{S}^*=\overline{S}\backslash\{0\}$, then $\overline{S}^*$ is compliment of $S$ in $\mathbb{F}_{2^n}\backslash\{0\}$ with size $2^n-k-1$ and by Proposition \ref{bound} $r(\overline{S}^*)=r(\overline{S})-(3(2^n-1-k)+1)=2^{2n}-3(2^n-k-1)(k+1)-(3k+1)-r(S)$.
	\end{proof}
	Using the above result it is enough to calculate the $r$-values of subsets of $\mathbb{F}_{2^n}\backslash\{0\}$ upto size $2^{n-1}$.
	\par Let $S$ be a subset of $\mathbb{F}_{2^n}\backslash\{0\}$ with size $m$, where $5\leq m\leq 2^{n-1}$. Suppose $S=A\cup B$ where $A\subseteq\mathbb{F}_{2^{n-1}}\backslash\{0\}$ with $|A|=k$ and $B\subseteq\mathbb{F}_{2^n}\backslash\mathbb{F}_{2^{n-1}}$ with $|B|=l$, where $0\leq k\leq 2^{n-1}-1 $ and $0\leq l\leq 2^{n-1}$.

	\par We continue by considering different cases on $k$ and $l$. The case $k=0$ is follows from the Proposition \ref{sum_free} and Corollary \ref{coro} by giving zero as an $r$-value in each subset size from 1 to $2^{n-1}$ in $\mathbb{F}_{2^n}$. The following result gives the $r$-value when $l=0$ and $l=1$.  
	\begin{prop}\label{l=0,1}
		Let $A\subseteq\mathbb{F}_{2^{n-1}}\backslash\{0\}$ with size $k$. Then $r(A)$ is a $r$-value of two subsets of size $k$ and $k+1$ in $\mathbb{F}_{2^{n}}\backslash\{0\}$.
	\end{prop}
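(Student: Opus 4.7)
The plan is to exhibit two concrete subsets of $\mathbb{F}_{2^n}\backslash\{0\}$, one of size $k$ and one of size $k+1$, that both have $r$-value equal to $r(A)$, so I need to locate them inside the partition $\mathbb{F}_{2^n}=\mathbb{F}_{2^{n-1}}\sqcup(\mathbb{F}_{2^n}\backslash\mathbb{F}_{2^{n-1}})$ that was already exploited in Theorem \ref{union}.

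For the size-$k$ witness I would simply reuse $A$ itself. The paragraph opening Section \ref{r 2_n} explains that prefixing every element of $\mathbb{F}_{2^{n-1}}$ with a $0$-coordinate embeds the field as an additive subgroup of $\mathbb{F}_{2^n}$, and that an additive relation $\alpha+\beta=\gamma$ in $\mathbb{F}_{2^{n-1}}$ persists verbatim in $\mathbb{F}_{2^n}$. Consequently the $R$-set of $A$ computed in the bigger field is identical to the one computed in $\mathbb{F}_{2^{n-1}}$, so $A$ viewed inside $\mathbb{F}_{2^n}\backslash\{0\}$ realizes the value $r(A)$ at cardinality $k$.

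For the size-$(k+1)$ witness I would pick any single element $b\in\mathbb{F}_{2^n}\backslash\mathbb{F}_{2^{n-1}}$, set $B=\{b\}$, and apply Theorem \ref{union} to conclude $r(A\cup B)=r(A)+3r(A,B,B)$. The remaining task is to show that $r(A,B,B)=0$: an element of $R(A,B,B)$ is a pair $(a,b')\in A\times\{b\}$ with $a+b'\in\{b\}$, which forces $a+b=b$ and hence $a=0$; but $0\notin A$ by hypothesis, so the set is empty. Thus $r(A\cup B)=r(A)$ while $|A\cup B|=k+1$ (the union is disjoint because $A\subseteq\mathbb{F}_{2^{n-1}}$ and $b\notin\mathbb{F}_{2^{n-1}}$), and $A\cup B\subseteq\mathbb{F}_{2^n}\backslash\{0\}$ since $b\neq 0$.

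I do not expect a genuine obstacle here: everything reduces to the observation that a lone element from the coset $\mathbb{F}_{2^n}\backslash\mathbb{F}_{2^{n-1}}$ cannot participate in any additive triple with $A$, the potential pitfall being only to check carefully that $0$ is excluded so the ``phantom'' triple $(0,b,b)$ does not appear. The argument is essentially two short invocations of results already in the paper (the embedding remark before Proposition \ref{bound}, and Theorem \ref{union}).
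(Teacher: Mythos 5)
Your proposal is correct and follows essentially the same route as the paper: the size-$k$ case via the embedding of $\mathbb{F}_{2^{n-1}}$ into $\mathbb{F}_{2^n}$, and the size-$(k+1)$ case by adjoining a single $b\in\mathbb{F}_{2^n}\backslash\mathbb{F}_{2^{n-1}}$ and applying Theorem \ref{union} with $r(A,\{b\},\{b\})=0$ because $a+b=b$ would force $a=0\notin A$. No substantive differences from the paper's argument.
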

	\begin{proof}
		Let $A=\{a_1,a_2,\dots,a_k\}$ be a subset of $\mathbb{F}_{2^{n-1}}\backslash\{0\}$. Then $r(A)$ in $\mathbb{F}_{2^n}$ is equal to $r(A)$ in $\mathbb{F}_{2^{n-1}}$. In other words, each $r$-values occurring in $\mathbb{F}_{2^{n-1}}\backslash\{0\}$ is a $r$-value of subsets in $\mathbb{F}_{2^n}\backslash\{0\}$ with same subset size.
		
		Let $b\in\mathbb{F}_{2^n}\backslash\mathbb{F}_{2^{n-1}}$ be fixed. Then we have $r(\{a_1,a_2,\dots,a_{k},b\})=r(\{a_1,a_2,\dots,a_{k}\})+3r(\{a_1,a_2,\dots,a_{k}\},\{b\},\{b\})$. But then $a_i+b\neq b$ for all $1\leq i\leq k$, we get $r(\{a_1,a_2,\dots,a_{k},b\})=r(\{a_1,a_2,\dots,a_{k}\})$. Here note that $\{a_1,a_2,\dots,a_{k}\}$ is a subset of size $k$ in $\mathbb{F}_{2^{n-1}}\backslash\{0\}$, giving an $r$-value of size $k+1$ subset in $\mathbb{F}_{2^n}\backslash\{0\}$.
	\end{proof}
	\par Suppose $l=2$, then we have the following result. 
	\begin{prop}\label{l=2}
		Suppose $S=\{a_1,a_2,\dots,a_k,b_1,b_2\}$, where $a_1,a_2,\dots,a_k\in \mathbb{F}_{2^{n-1}}\backslash\{0\}$, $k\geq2$ and $b_1,b_2\in \mathbb{F}_{2^n}\backslash\mathbb{F}_{2^{n-1}}$.  Then $r(\{a_1,a_2,\dots,a_k,b_1,b_2\})=r(\{a_1,a_2,\dots,a_k\})+\begin{cases}
			6 &\text{if }b_1+b_2\in \{a_1,a_2,\dots,a_k\}, \\
			0 & otherwie.
		\end{cases}$
	\end{prop}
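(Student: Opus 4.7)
The plan is to reduce everything to Theorem \ref{union} applied to the partition $S = A \cup B$, where $A = \{a_1,\dots,a_k\} \subseteq \mathbb{F}_{2^{n-1}}\setminus\{0\}$ and $B = \{b_1,b_2\} \subseteq \mathbb{F}_{2^n}\setminus\mathbb{F}_{2^{n-1}}$. That theorem immediately gives
\[
r(S) = r(A) + 3r(A,B,B),
\]
so the entire task is to evaluate $r(A,B,B)$, i.e.\ count ordered pairs $(a,b) \in A \times B$ with $a+b \in B$.

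Next I would enumerate the possibilities for such a pair. For $b = b_i$ and $a \in A$, the condition $a + b_i \in B = \{b_1,b_2\}$ splits into two subcases: $a + b_i = b_i$, which forces $a = 0$ and is excluded since $A \subseteq \mathbb{F}_{2^{n-1}}\setminus\{0\}$; or $a + b_i = b_{3-i}$, which forces $a = b_1 + b_2$. Thus a pair $(a,b)$ contributes to $r(A,B,B)$ if and only if $a = b_1 + b_2$. Note that $b_1 + b_2 \in \mathbb{F}_{2^{n-1}}\setminus\{0\}$, since both $b_i$ have leading coordinate $1$ and $b_1 \neq b_2$, so the sum indeed has the right shape to possibly lie in $A$.

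Finally I would split into the two cases of the statement. If $b_1 + b_2 \in A$, then the single element $a = b_1 + b_2$ of $A$ yields exactly two valid pairs, namely $(a,b_1)$ (with $a+b_1 = b_2$) and $(a,b_2)$ (with $a+b_2 = b_1$), so $r(A,B,B) = 2$ and the correction term is $3 \cdot 2 = 6$. If $b_1 + b_2 \notin A$, no $a \in A$ satisfies the required equation, so $r(A,B,B) = 0$ and there is no correction. Substituting both outcomes into the formula from Theorem \ref{union} gives the claimed expression.

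There is no real obstacle here; the only point that needs care is verifying that the two pairs produced in the positive case are genuinely distinct (they are, since $b_1 \neq b_2$) and that no pair with $a+b = b$ slips through (ruled out by $0 \notin A$). The hypothesis $k \geq 2$ plays no role in the computation itself but is presumably kept so that $S$ has size at least $5$, consistent with the range of $m$ fixed just before the proposition.
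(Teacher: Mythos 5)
Your proposal is correct and follows essentially the same route as the paper: both reduce to Theorem \ref{union} and then count the (at most two) contributing triples, which exist precisely when $b_1+b_2\in A$. The only cosmetic difference is that you evaluate $r(A,B,B)$ directly while the paper counts the pairs $(b_1,b_2,a_i),(b_2,b_1,a_i)$ in $R(B,B,A)$; these counts coincide by the symmetry established in the proof of Theorem \ref{union}, and your observation that $a+b_i=b_i$ is excluded because $0\notin A$ is the same point the paper relies on implicitly.
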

	\begin{proof}
		We have $k+2\leq2^{n-1}$ then $k\leq 2^{n-1}-2$, which implies there is an element in $\mathbb{F}_{2^{n-1}}\backslash\{0\}$ say $a$ such that $a\notin\{a_1,a_2,\dots,a_k\}$. But then there exists two elements $b_1,b_2\in \mathbb{F}_{2^n}\backslash\mathbb{F}_{2^{n-1}}$ such that $b_1+b_2=a$. In this case we have $r(\{a_1,a_2,\dots,a_k,b_1,b_2\})=r(\{a_1,a_2,\dots,a_{m-2}\})$. Suppose $a=a_i$ for some $\leq i\leq k$, then $(b_1,b_2,a_i),(b_2,b_1,a_i)\in R(\{b_1,b_2\},\{b_1,b_2\},\{a_1,a_2,\dots,a_k\})$. Hence by Theorem \ref{union} we have, $r(\{a_1,a_2,\dots,a_k,b_1,b_2\})=r(\{a_1,a_2,\dots,a_k\})+6$.
	\end{proof}
	From the above result, each $r$-value $r$ of subsets of $\mathbb{F}_{2^{n-1}}\backslash\{0\}$ of size $k$ where $1\leq k\leq 2^{n-1}-2$, $r$ and $r+6$ are $r$-values of size $k+2$ in $\mathbb{F}_{2^n}\backslash\{0\}$.	
	
	\begin{thm}\label{ABB}
		There exits two subsets $A$ of $\mathbb{F}_{2^{n-1}}\backslash\{0\}$ and $B$ of $\mathbb{F}_{2^n}\backslash\mathbb{F}_{2^{n-1}}$ with same size $2^{n-2}$ such that $r(A,B,B)=0$. Moreover, there is no such subsets of size greater than $2^{n-2}$.
	\end{thm}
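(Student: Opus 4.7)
The plan is to reformulate the condition $r(A,B,B)=0$ as a sumset disjointness statement inside $\mathbb{F}_{2^{n-1}}$, using that $\mathbb{F}_{2^n}\backslash\mathbb{F}_{2^{n-1}}$ is a single additive coset of $\mathbb{F}_{2^{n-1}}$. Fix $\omega\in\mathbb{F}_{2^n}\backslash\mathbb{F}_{2^{n-1}}$ and write $B=\omega+C$ for the unique $C\subseteq\mathbb{F}_{2^{n-1}}$ with $|C|=|B|$. For $a\in A$ and $b=\omega+c\in B$ one has $a+b=\omega+(a+c)$, so $a+b\in B$ iff $a+c\in C$, iff $a=c+c'$ for some $c'\in C$. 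Therefore $r(A,B,B)=0$ is equivalent to $A\cap(C+C)=\emptyset$, and since $0\in C+C$ (because $c+c=0$ in characteristic $2$) while $0\notin A$, this is the same as $A\cap((C+C)\backslash\{0\})=\emptyset$. This coset translation is the main technical step of the proof.

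For existence at size $2^{n-2}$ I would take $H$ to be an additive subgroup of $\mathbb{F}_{2^{n-1}}$ of index $2$ (for example $H=\ker(Tr)$, with $Tr$ the absolute trace), set $C=H$ so that $B=\omega+H$, and choose $A=\mathbb{F}_{2^{n-1}}\backslash H$, the opposite coset. Then $|A|=|B|=2^{n-2}$; since $H$ is a subgroup, $C+C=H$, and $A$ is disjoint from $H$ by construction, so the reformulation yields $r(A,B,B)=0$ immediately.

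For the moreover clause I argue by contradiction. Suppose $|A|=|B|=s>2^{n-2}$ with $r(A,B,B)=0$. The trivial sumset lower bound $|C+C|\geq|C|=s$, obtained from $c_{0}+C\subseteq C+C$ for any fixed $c_{0}\in C$, yields $|(C+C)\backslash\{0\}|\geq s-1$. Since $A$ and $(C+C)\backslash\{0\}$ are disjoint subsets of $\mathbb{F}_{2^{n-1}}\backslash\{0\}$, this forces $s+(s-1)\leq 2^{n-1}-1$, i.e.\ $s\leq 2^{n-2}$, contradicting the assumption.

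The only place demanding a genuine idea is the opening coset reformulation; once $r(A,B,B)$ becomes a sumset disjointness problem in $\mathbb{F}_{2^{n-1}}$, existence is realised by the structured choice $C=H$ (a subgroup minimises $|C+C|$, achieving the critical value $|C|$), and the upper bound follows from the universal inequality $|C+C|\geq|C|$ together with a one-line cardinality count in $\mathbb{F}_{2^{n-1}}\backslash\{0\}$.
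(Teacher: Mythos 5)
Your proof is correct. For the existence half you and the paper do essentially the same thing: the paper takes $A$ to be the nonzero vectors of $\mathbb{F}_{2^{n-1}}$ with last coordinate $1$ and $B$ the vectors of the complementary coset with last coordinate $0$, which is exactly your choice $A=\mathbb{F}_{2^{n-1}}\backslash H$, $B=\omega+H$ with $H$ the coordinate hyperplane $\{\alpha_n=0\}$ in place of $\ker(Tr)$; any index-$2$ additive subgroup works. The real difference is in the \emph{moreover} clause. The paper argues by pigeonhole on the last coordinate: any $A$ of size $2^{n-2}+1$ must contain an element with last coordinate $0$, and adding it to an element of $B$ of the displayed form lands back in $B$. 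That step silently assumes $B$ is still the specific set from the construction (all of whose elements have last coordinate $0$), so as written the paper only rules out enlarging one of the two sets while the other is held fixed at the constructed one. Your reformulation $r(A,B,B)=0\Leftrightarrow A\cap(C+C)=\emptyset$, combined with $|C+C|\geq|C|$ and $0\in C+C$, gives $|A|+|B|\leq 2^{n-1}$ for completely arbitrary $A$ and $B$, which yields the moreover clause in full generality (indeed a slightly stronger statement: the two sizes cannot both exceed $2^{n-2}$ even without being equal). So your route costs one preliminary step --- writing $B$ as a coset $\omega+C$ of a subset of $\mathbb{F}_{2^{n-1}}$ --- and buys a clean, construction-independent proof of the second assertion where the paper's argument is tied to its particular example.
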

	\begin{proof}
		Let $A=\{[\alpha_1,\alpha_2,\dots,\alpha_n]\in\mathbb{F}_{2^n}~|~\alpha_1=0,\alpha_n=1\}$ and $B=\{[\alpha_1,\alpha_2,\dots,\alpha_n]\in\mathbb{F}_{2^n}~|~\alpha_1=1,\alpha_n=0\}$. Clearly $|A|=|B|=2^{n-2}$. Let $C=\{[\alpha_1,\alpha_2,\dots,\alpha_n]\in\mathbb{F}_{2^n}~|~\alpha_1=\alpha_n=1\}$. Note that for any $a\in A$ and $b\in B$ we have $a+b\in C$. Therefore $r(A,B,B)=0$. 
		
		Suppose $A\subseteq\mathbb{F}_{2^{n-1}}\backslash\{0\}$ with $|A|=2^{n-2}+1$. Then there is an element $a$ in $A$ of the form $a=[0,\alpha_2,\dots,\alpha_{n-1},0]$. But then for any $[1,\beta_2,\dots,\beta_{n-1},0]\in B$,  we have  $[0,\alpha_2,\dots,\alpha_{n-1},0]+[1,\beta_2,\dots,\beta_{n-1},0]=[1,\alpha_2+\beta_2,\dots,\alpha_{n-1}+\beta_{n-1},0]\in B$. Which gives $r(A,B,B)>0$.
		
		Similarly suppose $B\subseteq\mathbb{F}_{2^n}\backslash\mathbb{F}_{2^{n-1}}$ with $|B|=2^{n-2}+1$. Then there is an element $b$ in $B$ of the form $b=[1,\beta_2,\dots,\beta_{n-1},1]$. But then for any $[0,\alpha_2,\dots,\alpha_{n-1},1]\in A$,  we have  $[0,\alpha_2,\dots,\alpha_{n-1},1]+[1,\beta_2,\dots,\beta_{n-1},1]=[1,\alpha_2+\beta_2,\dots,\alpha_{n-1}+\beta_{n-1},0]\in B$ giving $r(A,B,B)>0$.
	\end{proof}
	Suppose $k=1$, then we have the following result. 
	\begin{prop}\label{k=1}
		Suppose $S=\{a_1,b_1,\dots,b_{m-1}\}$, where $a_1\in \mathbb{F}_{2^{n-1}}\backslash\{0\}$ and $b_1,\dots,b_{m-1}\in \mathbb{F}_{2^n}\backslash\mathbb{F}_{2^{n-1}}$.  Then $r(\{a_1,b_1,\dots,b_{m-1}\})$ attain each values in the set $\left\{6i~|0\leq i\leq \left\lfloor\frac{m-1}{2}\right\rfloor\right\}$ whenever $m-1\leq2^{n-2}$ and whenever $m-1>2^{n-2}$, $r(\{a_1,b_1,\dots,b_{m-1}\})$ attain each values in the set $\left\{6i~|(m-1)-2^{n-2}\leq i\leq \left\lfloor\frac{m-1}{2}\right\rfloor\right\}$.
	\end{prop}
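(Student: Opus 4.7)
The plan is to reduce $r(S)$ via Theorem \ref{union} to a count of matching pairs inside $B$, and then enumerate all attainable counts by a simple orbit-counting argument.

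First, set $A=\{a_1\}$. Since $A\subseteq \mathbb{F}_{2^{n-1}}\backslash\{0\}$ and $B\subseteq \mathbb{F}_{2^n}\backslash\mathbb{F}_{2^{n-1}}$, Theorem \ref{union} gives $r(S)=r(A)+3r(A,B,B)$. Proposition \ref{size 1} (part 1) yields $r(A)=0$ because $a_1\neq 0$, so $r(S)=3r(A,B,B)$.

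Next, I would compute $r(A,B,B)$ directly from Definition \ref{D2}: it counts ordered pairs $(a_1,b)$ with $b\in B$ and $a_1+b\in B$. Every unordered pair $\{b_i,b_j\}\subseteq B$ with $b_i+b_j=a_1$ contributes exactly the two ordered pairs $(a_1,b_i)$ and $(a_1,b_j)$, and every contribution arises this way. Hence $r(A,B,B)=2t$, where $t$ is the number of unordered pairs in $B$ summing to $a_1$, and therefore $r(S)=6t$.

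It remains to characterize the set of attainable $t$. Because $a_1\in \mathbb{F}_{2^{n-1}}\backslash\{0\}$ has first coordinate $0$, the translation $x\mapsto x+a_1$ is a fixed-point-free involution on $\mathbb{F}_{2^n}\backslash \mathbb{F}_{2^{n-1}}$, partitioning this $2^{n-1}$-element set into $2^{n-2}$ orbits of size $2$. A pair $\{b_i,b_j\}\subseteq B$ with $b_i+b_j=a_1$ is exactly a full orbit contained in $B$. Writing $B$ as $t$ full orbits together with $m-1-2t$ elements each taken from distinct remaining orbits, existence of such a $B$ is equivalent to $0\le t\le \lfloor (m-1)/2\rfloor$ together with $t+(m-1-2t)\le 2^{n-2}$, i.e.\ $t\ge m-1-2^{n-2}$. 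The maximum of the two lower bounds gives $0$ when $m-1\le 2^{n-2}$ and $m-1-2^{n-2}$ otherwise, matching the statement. The converse direction (realization) is by explicit construction: pick any $t$ full orbits from the $2^{n-2}$ available, then $m-1-2t$ singletons from distinct remaining orbits, which is feasible precisely under the same inequality.

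The only substantive point is the orbit-partition step in the last paragraph; once that is in place, the bookkeeping of the two lower bounds $t\ge 0$ and $t\ge m-1-2^{n-2}$ reduces to taking a maximum, and the conclusion $r(S)=6t$ follows from the reduction of the first two steps.
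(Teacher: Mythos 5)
Your proof is correct, and it follows the same overall reduction as the paper (apply Theorem \ref{union} with $A=\{a_1\}$ to get $r(S)=3\,r(\{a_1\},B,B)=6t$, where $t$ is the number of unordered pairs in $B$ summing to $a_1$), but the way you determine which values of $t$ are attainable is genuinely different and, in my view, tighter. The paper handles the two regimes separately: it cites the explicit sets of Theorem \ref{ABB} to exhibit $t=0$ when $m-1\leq 2^{n-2}$, and a pigeonhole count of the $m-1$ distinct elements $a_1+b_i$ inside the $2^{n-1}$-element coset to force $t\geq (m-1)-2^{n-2}$ otherwise; the attainability of every intermediate multiple of $6$ is asserted rather than constructed. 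Your observation that $x\mapsto x+a_1$ is a fixed-point-free involution on $\mathbb{F}_{2^n}\backslash\mathbb{F}_{2^{n-1}}$, partitioning it into $2^{n-2}$ two-element orbits, converts the whole question into a single feasibility inequality ($t$ full orbits plus $m-1-2t$ singletons must fit into $2^{n-2}$ orbits, i.e.\ $t\geq m-1-2^{n-2}$, alongside $0\le t\le\lfloor (m-1)/2\rfloor$), and the explicit choice of orbits realizes every admissible $t$ in both regimes at once. What your approach buys is a unified treatment of the upper bound, the lower bound, and the realizability of each intermediate value; what the paper's buys is a reuse of Theorem \ref{ABB}, tying this proposition to the extremal pair of sets constructed there. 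One tiny bookkeeping point worth making explicit in your write-up: the range is nonempty exactly because $m-1\leq 2^{n-1}$, which is forced by $|\mathbb{F}_{2^n}\backslash\mathbb{F}_{2^{n-1}}|=2^{n-1}$.
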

	\begin{proof}
		Suppose $m-1\leq2^{n-2}$. If $a_1\in A$ and $\{b_1,\dots,b_{m-1}\}\subseteq B$, where $A$ and $B$ as defined in Theorem \ref{ABB}, then $r(\{a_1\},\{b_1,\dots,b_{m-1}\},\{b_1,\dots,b_{m-1}\})=0$ giving $r(\{a_1,b_1,\dots,b_{m-1}\})=0$. We have if $(a_1,b_i,b_j)\in R(\{a_1\},\{b_1,\dots,b_{m-1}\},\{b_1,\dots,b_{m-1}\})$ for some $1\leq i,j\leq m-1$ then $(a_1,b_j,b_i)\in R(\{a_1\},\{b_1,\dots,b_{m-1}\},\{b_1,\dots,b_{m-1}\})$. Also for each $a_i\in\mathbb{F}_{2^{n-1}}\backslash\{0\}$ there are $2^{n-2}$ pairs $\{b_t,b_s\}$, where $b_t,b_s\in\mathbb{F}_{2^n}\backslash\mathbb{F}_{2^{n-1}}$ such that $a_i=b_t+b_s$. Using this, among the $m-1$  elements $\{b_1,\dots,b_{m-1}\}$, atmost $\lfloor\frac{m-1}{2}\rfloor$ possible to give sum as $a_1$. Hence by Theorem \ref{union} $r(\{a_1,b_1,\dots,b_{m-1}\})$ attain each values in the set $\left\{6i~|0\leq i\leq \left\lfloor\frac{m-1}{2}\right\rfloor\right\}$. 
		
		Suppose $m-1>2^{n-2}$. For distinct $i$ and $j$ with $1\leq i,j\leq m-1$, we know that $a_1+b_i$ and $a_1+b_j$ distinct and they all lies in $\mathbb{F}_{2^n}\backslash\mathbb{F}_{2^{n-1}}$. Since  $m-1>2^{n-2}$ and $|\mathbb{F}_{2^n}\backslash\mathbb{F}_{2^{n-1}}|=2^{n-1}$ there are atleast $2((m-1)-2^{n-2})$ number of $i$'s between 1 and $m-1$ such that $a_1+b_i\in\{b_1,\dots,b_{m-1}\}$. Now as seen in above case, among the $m-1$  elements $\{b_1,\dots,b_{m-1}\}$, atmost $\lfloor\frac{m-1}{2}\rfloor$ possible to give sum as $a_1$. Hence in this case $r(\{a_1,b_1,\dots,b_{m-1}\})$ attain each values in the set $\left\{6i~|(m-1)-2^{n-2}\leq i\leq \left\lfloor\frac{m-1}{2}\right\rfloor\right\}$.
		
	\end{proof}
	The Proposition \ref{3-18},\ref{3-12},\ref{3-6} and \ref{3-0} gives $r$ value in the case $l=3$.
	\begin{prop}\label{3-18}
		Let $A\subseteq\mathbb{F}_{2^{n-1}}\backslash\{0\}$ with size $k\geq3$ and positive $r$-value. Then $r(A)+18$ is a $r$-value of size $k+3$ subset in $\mathbb{F}_{2^{n}}\backslash\{0\}$.
	\end{prop}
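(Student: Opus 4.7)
The plan is to apply Theorem \ref{union}: if I can write the desired size-$(k+3)$ subset as $S=A\cup B$ with $B\subseteq\mathbb{F}_{2^n}\setminus\mathbb{F}_{2^{n-1}}$ and $|B|=3$, then $r(S)=r(A)+3r(A,B,B)$, so it suffices to exhibit such a $B$ with $r(A,B,B)=6$.

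I first observe what $r(A,B,B)$ counts when $|B|=3$. A triple $(a,b,b')\in R(A,B,B)$ satisfies $a=b+b'$, so such triples are in bijection with ordered pairs $(b,b')\in B\times B$ for which $b+b'\in A$. The diagonal pairs give $a=0\notin A$, so only the six off-diagonal ordered pairs can contribute, and they come in three mirror pairs corresponding to the three unordered pairs in $B$. Hence $r(A,B,B)=6$ precisely when all three pairwise sums of elements of $B$ lie in $A$.

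Now I use the hypothesis $r(A)>0$: by the argument in Theorem \ref{multiple_of_6}, there exist $a_1,a_2,a_3\in A$ with $a_1+a_2=a_3$ (equivalently, $a_1+a_2+a_3=0$). I then construct $B$ so that its three pairwise sums are exactly $\{a_1,a_2,a_3\}$: pick any $b_1\in\mathbb{F}_{2^n}\setminus\mathbb{F}_{2^{n-1}}$ and set $b_2:=b_1+a_1$, $b_3:=b_1+a_2$. Because $a_1,a_2\in\mathbb{F}_{2^{n-1}}$ and $b_1$ has leading coordinate $1$, both $b_2$ and $b_3$ also lie in $\mathbb{F}_{2^n}\setminus\mathbb{F}_{2^{n-1}}$; they are pairwise distinct since $a_1,a_2,a_1+a_2=a_3$ are all nonzero. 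The pairwise sums are $b_1+b_2=a_1$, $b_1+b_3=a_2$, $b_2+b_3=a_1+a_2=a_3$, all in $A$. Since $A\subseteq\mathbb{F}_{2^{n-1}}\setminus\{0\}$ and $B\subseteq\mathbb{F}_{2^n}\setminus\mathbb{F}_{2^{n-1}}$, the union $S=A\cup B$ is disjoint, lies in $\mathbb{F}_{2^n}\setminus\{0\}$, and has size $k+3$.

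There is no real obstacle here; the only subtlety to watch is confirming that $r(A,B,B)$ is exactly $6$ (not more, not less), which is immediate from the count of off-diagonal ordered pairs in $B\times B$ once all three pairwise sums have been arranged to lie in $A$. Applying Theorem \ref{union} then yields $r(S)=r(A)+3\cdot 6=r(A)+18$, completing the proof.
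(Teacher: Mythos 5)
Your proposal is correct and follows essentially the same route as the paper: both invoke Theorem \ref{union}, extract a triple $a_1+a_2=a_3$ from $r(A)>0$, and take $B$ to be a translate of that triple into $\mathbb{F}_{2^n}\setminus\mathbb{F}_{2^{n-1}}$ so that the three pairwise sums of $B$ land exactly on $\{a_1,a_2,a_3\}$ (the paper's choice $b_i=a_i+[1,0,\dots,0]$ is your construction with $b_1=a_3+[1,0,\dots,0]$). Your write-up is in fact slightly more careful than the paper's, since you justify why $r(A,B,B)$ equals exactly $6$ and check that $b_1,b_2,b_3$ are distinct.
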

	\begin{proof}
		Let $A=\{a_1,a_2,\dots,a_k\}\subseteq\mathbb{F}_{2^{n-1}}\backslash\{0\}$ with size $k\geq3$ and positive $r$-value. Since $r(A)>0$, there exists three elements say $a_1,a_2,a_3$ such that $a_1+a_2=a_3$ in $\mathbb{F}_{2^n}$. 
		
		Let $b_1=a_1+[1,0,0,\dots,0]$, $b_2=a_2+[1,0,0,\dots,0]$ and $b_3=a_3+[1,0,0,\dots,0]$. Note that $b_1,b_2,b_3\in\mathbb{F}_{2^n}\backslash\mathbb{F}_{2^{n-1}}$ with $b_1+b_2=a_3$, $b_1+b_3=a_2$ and $b_2+b_3=a_1$ in $\mathbb{F}_{2^n}$. Thus if $B=\{b_1,b_2,b_3\}$, then $r(B,B,A)=6$. Hence by Theorem \ref{union} $r(A\cup B)=r(A)+18$.
	\end{proof}
	
	\begin{prop}\label{3-12}
		Let $A\subseteq\mathbb{F}_{2^{n-1}}\backslash\{0\}$ with size $k\geq2$ and $r(A)\neq k(k-1)$. Then $r(A)+12$ is a $r$-value of size $k+3$ subset in $\mathbb{F}_{2^{n}}\backslash\{0\}$.
	\end{prop}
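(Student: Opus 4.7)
The plan is to mimic the construction in Proposition \ref{3-18}, but this time arrange for $B=\{b_1,b_2,b_3\}\subseteq\mathbb{F}_{2^n}\backslash\mathbb{F}_{2^{n-1}}$ so that exactly two of the three pairwise sums $b_i+b_j$ land in $A$ (instead of all three). Since $r(A,B,B)$ counts ordered triples and each such coincidence $b_i+b_j=a\in A$ contributes the pair $(a,b_i,b_j),(a,b_j,b_i)$, two coincidences yield $r(A,B,B)=4$, and then Theorem \ref{union} gives $r(A\cup B)=r(A)+3\cdot 4=r(A)+12$, with $|A\cup B|=k+3$.

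First I would use Theorem \ref{Upper_bound} to translate the hypothesis $r(A)\neq k(k-1)$ into the statement that $A\cup\{0\}$ is \emph{not} a subgroup of $\mathbb{F}_{2^n}$. Hence there exist distinct $a_i,a_j\in A$ with $a_i+a_j\notin A\cup\{0\}$; set $c_1=a_i$, $c_2=a_j$, so that $c_1,c_2\in A$ with $c_1\neq c_2$, $c_1+c_2\neq 0$, and $c_1+c_2\notin A$.

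Next I would build $B$ explicitly. Fix any $b_1\in\mathbb{F}_{2^n}\backslash\mathbb{F}_{2^{n-1}}$ and put $b_2=b_1+c_1$, $b_3=b_1+c_2$. Because $c_1,c_2\in\mathbb{F}_{2^{n-1}}$, both $b_2$ and $b_3$ remain in $\mathbb{F}_{2^n}\backslash\mathbb{F}_{2^{n-1}}$; distinctness of $b_1,b_2,b_3$ follows from $c_1\neq 0$, $c_2\neq 0$, and $c_1\neq c_2$. By construction
\[
b_1+b_2=c_1\in A,\qquad b_1+b_3=c_2\in A,\qquad b_2+b_3=c_1+c_2\notin A.
\]
Then the elements of $R(A,B,B)$ are precisely $(c_1,b_1,b_2),(c_1,b_2,b_1),(c_2,b_1,b_3),(c_2,b_3,b_1)$, giving $r(A,B,B)=4$. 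Applying Theorem \ref{union} yields $r(A\cup B)=r(A)+12$, and $|A\cup B|=k+3$ since $A$ and $B$ are disjoint (being in complementary cosets of $\mathbb{F}_{2^{n-1}}$).

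The main subtlety is guaranteeing exactly two pairwise sums in $A$ rather than three: this is exactly where the hypothesis $r(A)\neq k(k-1)$ is used, via the non-subgroup property to supply $c_1,c_2\in A$ with $c_1+c_2\notin A$. The fact that pairwise sums of three elements of $B$ are forced to satisfy the linear relation $(b_1+b_2)+(b_1+b_3)=b_2+b_3$ means any two choices $c_1,c_2\in A$ automatically determine the third sum as $c_1+c_2$, so controlling whether this third sum lies in $A$ is the only nontrivial condition to check.
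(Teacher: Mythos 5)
Your proof is correct and takes essentially the same route as the paper: both extract distinct $a_1,a_2\in A$ with $a_1+a_2\notin A$ from the hypothesis $r(A)\neq k(k-1)$, build $B$ as a translate of $\{0,a_1,a_2\}$ (equivalently $\{a_1,a_2,a_1+a_2\}$) into the coset $\mathbb{F}_{2^n}\backslash\mathbb{F}_{2^{n-1}}$ so that exactly two of the three pairwise sums lie in $A$, and conclude via Theorem \ref{union} with $r(A,B,B)=4$. Your version is slightly more careful than the paper's (explicit distinctness of the $b_i$, explicit enumeration of $R(A,B,B)$), but the construction is the same.
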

	\begin{proof}
		
		Given $k\geq2$ and $r(A)\neq k(k-1)$, there exist two elements say $a_1, a_2$ such that $a_1+a_2\notin A$. Let $b_1=a_1+[1,0,0,\dots,0]$, $b_2=a_2+[1,0,0,\dots,0]$ and $b_3=a_1+a_2+[1,0,0,\dots,0]$. Note that $b_1+b_3=a_2\in A$ and $b_2+b_3=a_1\in A$, but $b_1+b_2=a_1+a_2\notin A$. Thus $r(B,B,A)=4$ and by Theorem \ref{union} $r(A\cup \{b_1,b_2,b_3\})=r(A)+12$.
	\end{proof}
	
	\begin{prop}\label{3-6}
		Let $A\subseteq\mathbb{F}_{2^{n-1}}\backslash\{0\}$ with size $0<k<2^{n-1}-3$ and $\overline{A}^*$ is the compliment of $A$ in $\mathbb{F}_{2^{n-1}}\backslash\{0\}$ with $r(\overline{A}^*)\neq|\overline{A}^*|(|\overline{A}^*|-1)$. Then $r(A)+6$ is a $r$-value of size $k+3$ subset in $\mathbb{F}_{2^{n}}\backslash\{0\}$.
	\end{prop}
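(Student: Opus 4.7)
The plan is to locate three elements $b_1,b_2,b_3\in\mathbb{F}_{2^n}\setminus\mathbb{F}_{2^{n-1}}$ for which exactly one of the three pairwise sums lies in $A$, so that $r(B,B,A)=2$ with $B=\{b_1,b_2,b_3\}$; Theorem \ref{union} will then yield $r(A\cup B)=r(A)+3r(A,B,B)=r(A)+6$, with $|A\cup B|=k+3$ since $A$ and $B$ sit in disjoint halves of $\mathbb{F}_{2^n}$. This mirrors the templates used in Propositions \ref{3-18} and \ref{3-12} (where three or two of the pairwise sums hit $A$), so the real work is extracting the correct ``outside'' pair from which to build $B$.

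The first step is to convert the hypothesis on $\overline{A}^*$ into a useful additive failure. By Theorem \ref{Upper_bound} applied to $\overline{A}^*$, the condition $r(\overline{A}^*)\neq|\overline{A}^*|(|\overline{A}^*|-1)$ is equivalent to $\overline{A}^*\cup\{0\}$ not being a subgroup of $\mathbb{F}_{2^{n-1}}$. Failure of additive closure then produces $c_1,c_2\in\overline{A}^*\cup\{0\}$ with $c_1+c_2\notin\overline{A}^*\cup\{0\}$; a quick check rules out $c_i=0$ (since then $c_1+c_2$ would be the other element) and $c_1=c_2$ (since then $c_1+c_2=0$), so in fact $c_1,c_2\in\overline{A}^*$ are distinct and $a:=c_1+c_2$ lies in $\mathbb{F}_{2^{n-1}}\setminus(\overline{A}^*\cup\{0\})=A$. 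The size condition $k<2^{n-1}-3$ guarantees $|\overline{A}^*|\geq 3$, keeping the hypothesis non-vacuous.

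Next, let $e=[1,0,\dots,0]\in\mathbb{F}_{2^n}\setminus\mathbb{F}_{2^{n-1}}$ and set $b_1=c_1+e$, $b_2=c_2+e$, $b_3=e$. Each $b_i$ has leading coordinate $1$, so lies in $\mathbb{F}_{2^n}\setminus\mathbb{F}_{2^{n-1}}$, and the three are distinct because $c_1,c_2,0$ are. The pairwise sums collapse nicely: $b_1+b_2=c_1+c_2=a\in A$, while $b_1+b_3=c_1\in\overline{A}^*$ and $b_2+b_3=c_2\in\overline{A}^*$ both lie outside $A$. Hence the only ordered pairs in $B\times B$ whose sum belongs to $A$ are $(b_1,b_2)$ and $(b_2,b_1)$, giving $r(B,B,A)=2$, and Theorem \ref{union} finishes the argument.

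The main obstacle is the very first step, namely converting the hypothesis $r(\overline{A}^*)\neq|\overline{A}^*|(|\overline{A}^*|-1)$ into the existence of a pair $c_1,c_2\in\overline{A}^*$ with $c_1+c_2\in A$; once that pair is in hand, the choice $b_3=e$ is essentially forced and the verification of the three sums is routine.
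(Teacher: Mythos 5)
Your proof is correct and follows essentially the same route as the paper's: extract $\overline{a}_1,\overline{a}_2\in\overline{A}^*$ with $\overline{a}_1+\overline{a}_2\in A$, translate into the coset $\mathbb{F}_{2^n}\backslash\mathbb{F}_{2^{n-1}}$ by $e=[1,0,\dots,0]$, and apply Theorem \ref{union} with $r(B,B,A)=2$. The only differences are cosmetic --- you take $B=\{c_1+e,c_2+e,e\}$ where the paper takes $B=\{\overline{a}_1+e,\overline{a}_2+e,(\overline{a}_1+\overline{a}_2)+e\}$, and you spell out (via Theorem \ref{Upper_bound}) the existence of the pair with sum in $A$, a step the paper simply asserts.
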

	\begin{proof}
		
		Given $r(\overline{A}^*)\neq|\overline{A}^*|(|\overline{A}^*|-1)$, there exist three elements say $\overline{a}_1, \overline{a}_2$ in $\overline{A}^*$ and $a_3\in A$ such that $\overline{a}_1+\overline{a}_2=a_3$. Let $b_1=\overline{a}_1+[1,0,0,\dots,0]$, $b_2=\overline{a}_2+[1,0,0,\dots,0]$ and $b_3=a_3+[1,0,0,\dots,0]$. Note that $b_1+b_2=a_3\in A$, $b_1+b_3=\overline{a}_2\notin A$ and $b_2+b_3=\overline{a}_1\notin A$. Thus $r(B,B,A)=2$ and by Theorem \ref{union} $r(A\cup \{b_1,b_2,b_3\})=r(A)+6$.
	\end{proof}
	
	\begin{prop}\label{3-0}
		Let $A\subseteq\mathbb{F}_{2^{n-1}}\backslash\{0\}$ with size $k<2^{n-1}-4$ and $\overline{A}^*$ is the compliment of $A$ in $\mathbb{F}_{2^{n-1}}\backslash\{0\}$ with $r(\overline{A}^*)>0$. Then $r(A)$ is a $r$-value of size $k+3$ subset in $\mathbb{F}_{2^{n}}\backslash\{0\}$.
	\end{prop}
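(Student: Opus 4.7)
The plan is to mirror the construction used in Propositions \ref{3-18}, \ref{3-12}, and \ref{3-6}: exhibit a three-element set $B\subseteq\mathbb{F}_{2^n}\backslash\mathbb{F}_{2^{n-1}}$ such that $A\cup B$ has size $k+3$ and, by Theorem \ref{union}, $r(A\cup B)=r(A)+3r(A,B,B)$ reduces to $r(A)$. Since the target increment is $0$, the goal reduces to choosing $B=\{b_1,b_2,b_3\}$ so that all three pairwise sums $b_i+b_j$ lie \emph{outside} of $A$, which makes $r(A,B,B)=0$.

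The source of the needed triple is the hypothesis $r(\overline{A}^*)>0$. First I would invoke Proposition \ref{3_subsset} (or directly the proof of Theorem \ref{multiple_of_6}) to extract three distinct elements $\overline{a}_1,\overline{a}_2,\overline{a}_3\in\overline{A}^*$ satisfying $\overline{a}_1+\overline{a}_2=\overline{a}_3$; the hypothesis $k<2^{n-1}-4$ ensures $|\overline{A}^*|\geq 4$ so such a closed triple genuinely sits inside $\overline{A}^*\subseteq\mathbb{F}_{2^{n-1}}\backslash\{0\}$. Then, exactly as in the preceding three propositions, I would define
\[
b_i=\overline{a}_i+[1,0,0,\dots,0],\qquad i=1,2,3,
\]
so each $b_i$ has leading coordinate $1$ and therefore $b_i\in\mathbb{F}_{2^n}\backslash\mathbb{F}_{2^{n-1}}$, while the leading bits cancel in pairwise sums, giving
\[
b_1+b_2=\overline{a}_3,\qquad b_1+b_3=\overline{a}_2,\qquad b_2+b_3=\overline{a}_1.
\]

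Because each of these sums lies in $\overline{A}^*$, none of them belongs to $A$, so $R(B,B,A)=\emptyset$ and hence $r(A,B,B)=r(B,B,A)=0$ by the symmetry established in Theorem \ref{union}. Applying Theorem \ref{union} to $A$ and $B$ then yields $r(A\cup B)=r(A)+3\cdot 0=r(A)$, while $|A\cup B|=k+3$ since $A$ and $B$ lie in the disjoint layers $\mathbb{F}_{2^{n-1}}$ and $\mathbb{F}_{2^n}\backslash\mathbb{F}_{2^{n-1}}$.

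There is essentially no obstacle beyond bookkeeping: the only thing worth double-checking is that the three $\overline{a}_i$'s are genuinely distinct (guaranteed because $\overline{a}_1+\overline{a}_2=\overline{a}_3$ in characteristic $2$ would force an equality such as $\overline{a}_1=0$, contradicting $\overline{a}_i\in\mathbb{F}_{2^{n-1}}\backslash\{0\}$), so that $B$ really has three elements and $A\cup B$ has size $k+3$. The size bound $k<2^{n-1}-4$ is what ensures $\overline{A}^*$ is large enough to carry such a triple at all, which is exactly the role this hypothesis plays.
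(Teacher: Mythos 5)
Your proposal is correct and matches the paper's own proof essentially line for line: both extract a Schur triple $\overline{a}_1+\overline{a}_2=\overline{a}_3$ from $r(\overline{A}^*)>0$, shift it by $[1,0,\dots,0]$ to get $B=\{b_1,b_2,b_3\}\subseteq\mathbb{F}_{2^n}\backslash\mathbb{F}_{2^{n-1}}$ whose pairwise sums all land in $\overline{A}^*$, and conclude $r(B,B,A)=0$ via Theorem \ref{union}. Your added check that the $\overline{a}_i$ are distinct (and hence $|A\cup B|=k+3$) is a small but worthwhile piece of bookkeeping that the paper leaves implicit.
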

	\begin{proof}
		Given $r(\overline{A}^*)>0$, there exist three elements say $\overline{a}_1, \overline{a}_2,\overline{a}_3$ in $\overline{A}^*$ such that $\overline{a}_1+\overline{a}_2=\overline{a}_3$. Let $b_1=\overline{a}_1+[1,0,0,\dots,0]$, $b_2=\overline{a}_2+[1,0,0,\dots,0]$ and $b_3=\overline{a}_3+[1,0,0,\dots,0]$. Note that $b_1+b_2=\overline{a}_3\in A$, $b_1+b_3=\overline{a}_2$ and $b_2+b_3=\overline{a}_1$. Thus $r(B,B,A)=0$ and by Theorem \ref{union} $r(A\cup \{b_1,b_2,b_3\})=r(A)$.
	\end{proof}
	When $l=4$, $r$-values of subset size $k+4$ is obtained using the results \ref{4-36},\ref{4-24},\ref{4-12} and \ref{4-0}.
	\begin{prop}\label{4-36}
		Let $A\subseteq\mathbb{F}_{2^{n-1}}\backslash\{0\}$ with size $k$ and positive $r$-value.  Then $r(A)+36$ is a $r$-value of size $k+4$ subset in $\mathbb{F}_{2^{n}}\backslash\{0\}$.
	\end{prop}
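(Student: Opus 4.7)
The plan is to mirror the construction used in Proposition \ref{3-18}, but with one extra element so that the bilinear contribution to $r(A,B,B)$ is maximized. Recall from Theorem \ref{union} that if $B\subseteq\mathbb{F}_{2^n}\backslash\mathbb{F}_{2^{n-1}}$ then $r(A\cup B)=r(A)+3r(A,B,B)$, and since $b_i+b_i=0\notin A$, we have $r(A,B,B)=r(B,B,A)\leq |B|(|B|-1)$. For $|B|=4$ this gives the ceiling $12$, and $12$ is attained exactly when every pairwise sum of distinct elements of $B$ lies in $A$. Hitting this ceiling yields the desired increment of $3\cdot 12=36$.

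First I would invoke the hypothesis $r(A)>0$: by Theorem \ref{multiple_of_6} there exist three distinct elements $a_1,a_2,a_3\in A$ with $a_1+a_2=a_3$ (so also $a_1+a_3=a_2$ and $a_2+a_3=a_1$ in characteristic 2). Then, following the template of the earlier $l=3$ propositions, I would set $e=[1,0,0,\dots,0]\in\mathbb{F}_{2^n}\backslash\mathbb{F}_{2^{n-1}}$ and define
$$B=\{e,\;a_1+e,\;a_2+e,\;a_3+e\}.$$
Each element of $B$ has first coordinate $1$, so $B\subseteq\mathbb{F}_{2^n}\backslash\mathbb{F}_{2^{n-1}}$, and the four elements are distinct because the $a_i$ are distinct and nonzero; in particular $A\cap B=\emptyset$ and $|A\cup B|=k+4$.

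Next I would check the six pairwise sums in $B$: the three involving $e$ collapse to $a_1,a_2,a_3$, while the three among $\{a_i+e\}_{i=1,2,3}$ collapse to $a_i+a_j=a_k$ (where $\{i,j,k\}=\{1,2,3\}$). All six distinct sums therefore lie in $A$, so every one of the twelve ordered pairs $(b,b')\in B\times B$ with $b\neq b'$ contributes to $R(B,B,A)$. This gives $r(B,B,A)=12$, and by the symmetry $r(A,B,B)=r(B,B,A)$ established inside Theorem \ref{union}, we conclude via that theorem that $r(A\cup B)=r(A)+36$.

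There is no serious obstacle here beyond spotting the right four-point configuration: the key observation is that translating any Schur triple $\{a_1,a_2,a_3\}\subseteq A$ by the single vector $e$ and then adjoining $e$ itself produces a 4-set whose complete sum-graph lies in $A$, saturating the bound $r(A,B,B)\leq |B|(|B|-1)$. The only minor bookkeeping is to confirm that the four $b$'s are distinct and non-zero and that $B$ avoids $\mathbb{F}_{2^{n-1}}$, both of which are immediate from the first-coordinate inspection.
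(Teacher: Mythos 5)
Your proposal is correct and follows essentially the same construction as the paper: the paper fixes an arbitrary $b\in\mathbb{F}_{2^n}\backslash\mathbb{F}_{2^{n-1}}$ and takes $B=\{b,b+a_1,b+a_2,b+a_3\}$ for a Schur triple $a_1+a_2=a_3$ in $A$, then computes $r(B,B,A)=12$ and applies Theorem \ref{union}. Your version merely specializes $b$ to $[1,0,\dots,0]$ and spells out the twelve ordered pairs more explicitly.
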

	\begin{proof}
		Given $r(A)>0$, there exist three elements say $a_1,a_2,a_3\in A$ such that $a_1+a_2=a_3$. Let $b\in\mathbb{F}_{2^n}\backslash\mathbb{F}_{2^{n-1}}$ be fixed and $b_1=b+a_1$, $b_2=b+a_2$ and $b_3=b+a_3$. Note that all three $b_1,b_2,b_3$ are distinct. If $B=\{b,b_1,b_2,b_3\}$, then $r(B,B,A)=12$. Hence by Theorem \ref{union} $r(A\cup B)=r(A)+36$.
	\end{proof}
	
	\begin{prop}\label{4-24}
		Let $A\subseteq\mathbb{F}_{2^{n-1}}\backslash\{0\}$ with size $k\geq2$ and $r(A)\neq k(k-1)$. Then $r(A)+24$ is a $r$-value of size $k+4$ subset in $\mathbb{F}_{2^{n}}\backslash\{0\}$.
	\end{prop}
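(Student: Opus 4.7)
The plan is to mimic the construction pattern used in Proposition \ref{4-36} and Proposition \ref{3-12}: pick a convenient seed configuration inside $A$ that forces some pair-sums of the new block $B\subseteq\mathbb{F}_{2^n}\backslash\mathbb{F}_{2^{n-1}}$ to land in $A$ and others not, so that $r(A,B,B)$ hits the exact value $8$; then invoke Theorem \ref{union} to pick up the $+24$.

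First I would extract the combinatorial seed from the hypothesis. The assumption $k\geq 2$ together with $r(A)\neq k(k-1)$ excludes the subgroup case of Theorem \ref{Upper_bound}, so there must exist two distinct elements $a_1,a_2\in A$ with $a_1+a_2\notin A$ (note $a_1+a_2\neq 0$ since $a_1\neq a_2$). I would then fix an arbitrary $b\in\mathbb{F}_{2^n}\backslash\mathbb{F}_{2^{n-1}}$ and set
\[
B=\{\,b,\;b+a_1,\;b+a_2,\;b+a_1+a_2\,\}.
\]
All four elements have first coordinate $1$ (since $a_1,a_2\in\mathbb{F}_{2^{n-1}}$), so $B\subseteq\mathbb{F}_{2^n}\backslash\mathbb{F}_{2^{n-1}}$, and they are distinct because $0,a_1,a_2,a_1+a_2$ are distinct.

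Next I would tabulate the six unordered pair-sums in $B$: four of them, namely $b+(b+a_1)=a_1$, $b+(b+a_2)=a_2$, $(b+a_1)+(b+a_1+a_2)=a_2$, and $(b+a_2)+(b+a_1+a_2)=a_1$, land in $A$, whereas the remaining two, $b+(b+a_1+a_2)$ and $(b+a_1)+(b+a_2)$, both equal $a_1+a_2\notin A$. Each of the four successful unordered pairs contributes $2$ ordered triples to $R(B,B,A)$, so $r(B,B,A)=8$, and by the chain of equivalences in the proof of Theorem \ref{union}, $r(A,B,B)=8$ as well. Theorem \ref{union} then gives $r(A\cup B)=r(A)+3\cdot 8=r(A)+24$, and $|A\cup B|=k+4$ since $A$ and $B$ lie in disjoint cosets of $\mathbb{F}_{2^{n-1}}$.

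The only step that needs any real care is the pair-sum bookkeeping: one has to verify that the two ``bad'' pairs both produce exactly the forbidden sum $a_1+a_2$ (so they contribute $0$, not an accidental extra), and that no pair is counted twice. This is really just a six-line verification, and I do not anticipate a substantive obstacle; the main conceptual content is realizing that the affine $2$-flat $b+\{0,a_1,a_2,a_1+a_2\}$ is the right choice for $B$, which is the natural degree-$4$ analogue of the triangle used in Proposition \ref{3-12}.
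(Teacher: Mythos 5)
Your construction is exactly the one in the paper: take $a_1,a_2\in A$ with $a_1+a_2\notin A$ (guaranteed by $r(A)\neq k(k-1)$), set $B=\{b,\,b+a_1,\,b+a_2,\,b+a_1+a_2\}$, verify $r(B,B,A)=8$, and apply Theorem \ref{union}. The proof is correct and matches the paper's argument, with your pair-sum bookkeeping merely making explicit what the paper asserts.
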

	\begin{proof}
		Given $k\geq2$ and $r(A)\neq k(k-1)$, there exist two elements say $a_1,a_2\in A$ such that $a_1+a_2\notin A$. Let $b\in\mathbb{F}_{2^n}\backslash\mathbb{F}_{2^{n-1}}$ be fixed and $b_1=b+a_1$, $b_2=b+a_2$ and $b_3=b+a_1+a_2$. Note that all three $b_1,b_2,b_3$ are distinct. If $B=\{b,b_1,b_2,b_3\}$, then $r(B,B,A)=8$. Hence by Theorem \ref{union} $r(A\cup B)=r(A)+24$.
	\end{proof}
	
	\begin{prop}\label{4-12}
		Let $A\subseteq\mathbb{F}_{2^{n-1}}\backslash\{0\}$ with size $0<k<2^{n-1}-3$ and $\overline{A}^*$ is the compliment of $A$ in $\mathbb{F}_{2^{n-1}}\backslash\{0\}$ with $r(\overline{A}^*)\neq|\overline{A}^*|(|\overline{A}^*|-1)$. Then $r(A)+12$ is a $r$-value of size $k+4$ subset in $\mathbb{F}_{2^{n}}\backslash\{0\}$.
	\end{prop}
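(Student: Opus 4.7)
The plan is to mirror the strategy of Propositions \ref{4-36} and \ref{4-24}: construct a four-element block $B\subseteq\mathbb{F}_{2^n}\setminus\mathbb{F}_{2^{n-1}}$ for which $r(B,B,A)=4$, so that Theorem \ref{union} immediately yields $r(A\cup B)=r(A)+3\cdot 4=r(A)+12$. What needs to be engineered is that exactly two of the six unordered pairs in $B$ have their sum landing in $A$, with the other four missing $A$.

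First I would use the hypothesis to extract a useful triple. By Theorem \ref{Upper_bound}, the condition $r(\overline{A}^*)\neq|\overline{A}^*|(|\overline{A}^*|-1)$ says that $\overline{A}^*\cup\{0\}$ is not a subgroup of $\mathbb{F}_{2^{n-1}}$, so there exist distinct $\overline{a}_1,\overline{a}_2\in\overline{A}^*$ with $\overline{a}_1+\overline{a}_2\notin\overline{A}^*\cup\{0\}$. Since the sum is a nonzero element of $\mathbb{F}_{2^{n-1}}$ outside $\overline{A}^*$, it must lie in $A$; call it $a_3$. The numerical bound $k<2^{n-1}-3$ ensures $|\overline{A}^*|\geq 3$, so such a pair is available.

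Next, fix any $b\in\mathbb{F}_{2^n}\setminus\mathbb{F}_{2^{n-1}}$ and set $b_1=b+\overline{a}_1$, $b_2=b+\overline{a}_2$, $b_3=b+a_3$, exactly as in the template of the preceding two propositions. All four elements lie in $\mathbb{F}_{2^n}\setminus\mathbb{F}_{2^{n-1}}$ because the first coordinate of $b$ is $1$ and the shifts come from $\mathbb{F}_{2^{n-1}}$; they are distinct because $\overline{a}_1,\overline{a}_2,a_3$ are three distinct nonzero elements of $\mathbb{F}_{2^{n-1}}$. I would then tabulate the six pairwise sums inside $B=\{b,b_1,b_2,b_3\}$: $b+b_3=a_3\in A$ and $b_1+b_2=\overline{a}_1+\overline{a}_2=a_3\in A$, while $b+b_1=\overline{a}_1$, $b+b_2=\overline{a}_2$, $b_1+b_3=\overline{a}_2$, and $b_2+b_3=\overline{a}_1$ all lie in $\overline{A}^*$ and hence outside $A$. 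This gives exactly $2\cdot 2=4$ ordered triples in $R(B,B,A)$, so $r(B,B,A)=4$.

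The main (minor) obstacle is arranging that precisely two unordered pairs of $B$ hit $A$: this is what forces the specific choice of $b_3$ as the lift of the bad sum $a_3$, which simultaneously makes $b+b_3$ land in $A$ and makes $b_1+b_3,b_2+b_3$ miss $A$ (because those sums are exactly the complement elements $\overline{a}_2$ and $\overline{a}_1$). Once the block $B$ is in place, Theorem \ref{union} finishes the proof with $r(A\cup B)=r(A)+12$, and everything else is a mechanical verification against the template already established in Propositions \ref{3-6} and \ref{4-24}.
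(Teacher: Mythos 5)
Your construction is exactly the paper's: extract $\overline{a}_1,\overline{a}_2\in\overline{A}^*$ with $\overline{a}_1+\overline{a}_2=a_3\in A$, form $B=\{b,b+\overline{a}_1,b+\overline{a}_2,b+a_3\}$, verify $r(B,B,A)=4$, and apply Theorem \ref{union}. Your write-up just supplies the explicit tabulation of the six pairwise sums and the distinctness checks that the paper leaves implicit, so it is correct and follows the same route.
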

	\begin{proof}
		
		Given $r(\overline{A}^*)\neq|\overline{A}^*|(|\overline{A}^*|-1)$, there exist three elements say $\overline{a}_1, \overline{a}_2 \in \overline{A}^*$ and $a_3\in A$ such that $\overline{a}_1+\overline{a}_2=a_3$. Let $b\in\mathbb{F}_{2^n}\backslash\mathbb{F}_{2^{n-1}}$ be fixed and $b_1=b+\overline{a}_1$, $b_2=b+\overline{a}_2$ and $b_3=b+a_3$. Note that all three $b_1,b_2,b_3$ are distinct. If $B=\{b,b_1,b_2,b_3\}$, then $r(B,B,A)=4$. Hence by Theorem \ref{union} $r(A\cup B)=r(A)+12$.
	\end{proof}
	\begin{prop}\label{4-0}
		Let $A\subseteq\mathbb{F}_{2^{n-1}}\backslash\{0\}$ with size $k<2^{n-1}-4$ and $\overline{A}^*$ is the compliment of $A$ in $\mathbb{F}_{2^{n-1}}\backslash\{0\}$ with $r(\overline{A}^*)>0$. Then $r(A)$ is a $r$-value of size $k+4$ subset in $\mathbb{F}_{2^{n}}\backslash\{0\}$.
	\end{prop}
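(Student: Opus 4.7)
The plan is to mirror the construction used in Proposition \ref{3-0} and Proposition \ref{4-12}, but now engineering the four "lifted" elements so that \emph{no} pairwise sum lands in $A$. Since by Theorem \ref{union} we have $r(A\cup B)=r(A)+3r(A,B,B)$, the target equality $r(A\cup B)=r(A)$ is equivalent to arranging $r(A,B,B)=0$, so I will aim to build a $B\subseteq\mathbb{F}_{2^n}\setminus\mathbb{F}_{2^{n-1}}$ of size $4$ all of whose pairwise sums lie in $\overline{A}^*$ rather than $A$.

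The input I have is that $r(\overline{A}^*)>0$, so there exist $\overline{a}_1,\overline{a}_2,\overline{a}_3\in\overline{A}^*$ with $\overline{a}_1+\overline{a}_2=\overline{a}_3$; by the characteristic-two symmetry this triple also satisfies $\overline{a}_1+\overline{a}_3=\overline{a}_2$ and $\overline{a}_2+\overline{a}_3=\overline{a}_1$. Fix any $b\in\mathbb{F}_{2^n}\setminus\mathbb{F}_{2^{n-1}}$ and define $b_i:=b+\overline{a}_i$ for $i=1,2,3$; since $\overline{a}_i\in\mathbb{F}_{2^{n-1}}$ each $b_i$ again has first coordinate $1$, so $B:=\{b,b_1,b_2,b_3\}\subseteq\mathbb{F}_{2^n}\setminus\mathbb{F}_{2^{n-1}}$, and distinctness of the $\overline{a}_i$ makes $B$ a $4$-element set disjoint from $A$.

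The verification step is a short table of six pairwise sums: $b+b_i=\overline{a}_i\in\overline{A}^*$ for $i=1,2,3$, and $b_i+b_j=\overline{a}_i+\overline{a}_j$ for $i\neq j$, which by the closure of $\{\overline{a}_1,\overline{a}_2,\overline{a}_3\}$ under pairwise sums again lies in $\overline{A}^*$. Hence no sum of two elements of $B$ belongs to $A$, so $r(A,B,B)=0$, and Theorem \ref{union} gives $r(A\cup B)=r(A)$ with $|A\cup B|=k+4$. The only subtle point to flag in passing is that the existence of the triple in $\overline{A}^*$ implicitly uses $|\overline{A}^*|=2^{n-1}-1-k\geq 3$, which is comfortably supplied by the hypothesis $k<2^{n-1}-4$; apart from that the argument is essentially a re-indexing of the $l=3$ case, so I expect no real obstacle.
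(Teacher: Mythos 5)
Your construction is exactly the paper's: take the sum-closed triple $\overline{a}_1,\overline{a}_2,\overline{a}_3$ in $\overline{A}^*$, set $b_i=b+\overline{a}_i$ for a fixed $b\in\mathbb{F}_{2^n}\backslash\mathbb{F}_{2^{n-1}}$, and observe that all pairwise sums in $B=\{b,b_1,b_2,b_3\}$ land in $\overline{A}^*$, so $r(A,B,B)=0$ and Theorem \ref{union} gives $r(A\cup B)=r(A)$. The argument is correct, and you in fact spell out the six-sum verification slightly more explicitly than the paper does.
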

	\begin{proof}
		Given $r(\overline{A}^*)>0$, there exist three elements say $\overline{a}_1,\overline{a}_2,\overline{a}_3\in \overline{A}^*$ such that $\overline{a}_1+\overline{a}_2=\overline{a}_3$. Let $b\in\mathbb{F}_{2^n}\backslash\mathbb{F}_{2^{n-1}}$ be fixed and $b_1=b+\overline{a}_1$, $b_2=b+\overline{a}_2$ and $b_3=b+\overline{a}_3$. Note that all three $b_1,b_2,b_3$ are distinct. If $B=\{b,b_1,b_2,b_3\}$, then $r(B,B,A)=0$. Hence by Theorem \ref{union} $r(A\cup B)=r(A)$.
	\end{proof}
	The following result cover the case $k=2$ and $l=5$ to give $r$-values of subsets of size 7 in $\mathbb{F}_{2^n}\backslash\{0\}$.
	\begin{prop}\label{2+5}
		Suppose $S=\{a_1,a_2,b_1,b_2,b_3,b_4,b_5\}$, where $a_1,a_2\in \mathbb{F}_{2^{n-1}}\backslash\{0\}$ and $b_1,b_2,b_3,b_4,b_5\in \mathbb{F}_{2^n}\backslash\mathbb{F}_{2^{n-1}}$.  Then for $n\geq 5$, $r(S)$ will attain the values 0,6,12,18 and 24. For $n=4$, $r(S)$ will attain the values 0,12,18 and 24.
	\end{prop}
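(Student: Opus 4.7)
The plan is to apply Theorem \ref{union} with $A=\{a_1,a_2\}$ and $B=\{b_1,\dots,b_5\}$. Since $a_1\neq a_2$ and both are nonzero, Proposition \ref{size 2} gives $r(A)=0$, hence $r(S)=3\,r(A,B,B)$. The problem therefore reduces to realizing $r(A,B,B)\in\{0,2,4,6,8\}$ for $n\geq 5$, and $r(A,B,B)\in\{0,4,6,8\}$ for $n=4$.

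Next I would replace $r(A,B,B)$ by a matching count. For each nonzero $a\in\mathbb{F}_{2^{n-1}}$ the map $b\mapsto b+a$ is a fixed-point-free involution on $\mathbb{F}_{2^n}\setminus\mathbb{F}_{2^{n-1}}$, partitioning that set into $2^{n-2}$ unordered pairs. Writing $p_i$ for the number of such pairs lying entirely inside $B$ when $a=a_i$, each full pair $\{b,b+a_i\}\subseteq B$ contributes the two ordered triples $(a_i,b,b+a_i)$ and $(a_i,b+a_i,b)$ to $R(\{a_i\},B,B)$, and no other $b\in B$ contributes; hence $r(A,B,B)=2(p_1+p_2)$. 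Because $|B|=5$, we always have $p_i\in\{0,1,2\}$.

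For $n\geq 5$ the ambient set $\mathbb{F}_{2^n}\setminus\mathbb{F}_{2^{n-1}}$ has size at least $16$, and the two matchings induced by $a_1$ and $a_2$ leave plenty of room to position $B$ with any prescribed pair $(p_1,p_2)$. I would give five explicit constructions, one for each target sum $p_1+p_2\in\{0,1,2,3,4\}$, using for instance $a_1=[0,\dots,0,1]$ and $a_2=[0,\dots,0,1,0]$ and choosing the five $b_j$'s coordinate by coordinate so that a prescribed number of $a_i$-pairs fall inside $B$. For $n=4$ a pigeonhole count is the key restriction: the matching for any nonzero $a$ has only $4$ pairs, and since $|B^c|=3$ inside the $8$-element ambient set, at most one pair can be entirely in $B^c$, forcing $p_i\geq 1$. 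This rules out $p_1+p_2\in\{0,1\}$, so the values $r(S)=0$ and $r(S)=6$ are both unattainable; the remaining targets $12,18,24$ are then realized by explicit choices of $\{a_1,a_2\}$ and $B$ inside $\mathbb{F}_{2^4}$, which can be done either by hand or by a brief exhaustive check.

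The main obstacle is engineering a single $B$ that simultaneously controls the pair counts with respect to \emph{both} $a_1$ and $a_2$: for $n\geq 5$ the construction is easy because of the large ambient space, but for $n=4$ one has only $8$ candidate elements, so compatibility must be verified directly. I also note that the listed value $r(S)=0$ in the $n=4$ case is in tension with the pigeonhole lower bound $r(S)\geq 12$ above; on the plan as written, the genuine achievable spectrum for $n=4$ is $\{12,18,24\}$, and I would treat the entry ``$0$'' in the statement as a typographical slip to be corrected rather than a claim to be proved.
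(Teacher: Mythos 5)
Your opening reduction coincides with the paper's: apply Theorem \ref{union} with $A=\{a_1,a_2\}$, $B=\{b_1,\dots,b_5\}$, note $r(A)=0$ by Proposition \ref{size 2}, and conclude $r(S)=3\,r(A,B,B)$. From there the routes genuinely differ. The paper writes out the $5\times5$ Cayley table of $B$, classifies the multiset of pairwise sums into ``types'' according to the $r$-values of $\{x_1,x_2,x_3,x_4\}$ and $\{x_5,x_6,x_7\}$, and reads off $r(S)$ from how $\{a_1,a_2\}$ meets the sum set. Your reformulation --- $b\mapsto b+a_i$ is a fixed-point-free involution on $\mathbb{F}_{2^n}\backslash\mathbb{F}_{2^{n-1}}$, so $r(A,B,B)=2(p_1+p_2)$ with $p_i\in\{0,1,2\}$ because $|B|=5$ --- gets the containment $r(S)\in\{0,6,12,18,24\}$ in one line and is considerably cleaner. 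What you leave unfinished is the attainability half for $n\geq 5$: you assert but do not exhibit sets $B$ realizing each value of $p_1+p_2$. This is routine (work inside the $4$-cycles $\{b,\,b+a_1,\,b+a_2,\,b+a_1+a_2\}$ of the union of the two matchings: one such cycle gives $p_1=p_2=2$, an independent set across $2^{n-3}\geq 4$ cycles gives $p_1=p_2=0$, and intermediate configurations give the rest), but it should be written out to match the completeness of the paper's case enumeration.

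Your pigeonhole observation for $n=4$ is correct and is the most valuable part of the proposal: a $5$-element $B$ cannot be an independent set of a perfect matching on the $8$-element set $\mathbb{F}_{2^4}\backslash\mathbb{F}_{2^3}$, so $p_1,p_2\geq1$ and $r(S)\geq12$; hence the value $0$ listed in the statement for $n=4$ (and carried into the $k=2$, $l=5$ row of the paper's $\mathbb{F}_{2^4}$ table) is not attainable, and neither is $6$. The paper's own proof, read carefully, agrees with you: for $n=4$ only Types 3 and 4 can occur, in both of which the seven distinct pairwise sums exhaust $\mathbb{F}_{2^3}\backslash\{0\}$, so the subcase ``$\{a_1,a_2\}\not\subset\{x_1,\dots,x_7\}$'' that produces $r(S)=0$ is vacuous. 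So you have not merely reproved the proposition by other means; you have identified that its $n=4$ spectrum should read $\{12,18,24\}$, and your matching argument also explains structurally why exactly three of the seven possible sums have $p_a=2$ while four have $p_a=1$, which recovers the paper's Type 3/Type 4 dichotomy as a corollary.
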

	
	\begin{proof}
		Let $A=\{a_1,a_2\}$ and $B=\{b_1,b_2,b_3,b_4,b_5\}$.
		Using the following Cayley Table \ref{Type 1}, $r(B,B,A)$ depends on $r(\{x_1,x_2,x_3,x_4\},\{x_1,x_2,x_3,x_4\},A)$.	\\
		\begin{table}[h]
			\caption{Type 1}
			\label{Type 1}
			\begin{center}
				\begin{tabular}{c||c|c|c|c|c}
					
					&$b_1$ &$b_2$ &$b_3$ &$b_4$ &$b_5$ \\
					\hline
					\hline
					$b_1$ &0 &$x_1$ &$x_2$ &$x_3$ &$x_4$ \\
					\hline
					$b_2$&$x_1$ &0 &$x_1+x_2$ &$x_1+x_3$ &$x_1+x_4$ \\
					\hline
					$b_3$&$x_2$ &$x_1+x_2$ &0 &$x_2+x_3$ &$x_2+x_4$ \\
					\hline
					$b_4$&$x_3$ &$x_1+x_3$ &$x_2+x_3$ &0 &$x_3+x_4$ \\
					\hline
					$b_5$&$x_4$ &$x_1+x_4$ &$x_2+x_4$ & $x_3+x_4$&0 \\
				\end{tabular}
			\end{center}
		\end{table}
		Since  $x_1,x_2,x_3,x_4\in\mathbb{F}_{2^{n-1}}\backslash\{0\}$, we have $r(\{x_1,x_2,x_3,x_4\})$ is 0 or 6. If $r(\{x_1,x_2,x_3,x_4\})=0$, then $x_1+x_2,x_1+x_3,x_1+x_4\notin\{x_1,x_2,x_3,x_4\}$. Say $x_1+x_2=x_5,x_1+x_3=x_6$ and $x_1+x_4=x_7$.
		
		Suppose $r$-value of $\{x_5,x_6,x_7\}$ is 0, then $x_5+x_6,x_5+x_7,x_6+x_7$ not in $\{x_5,x_6,x_7\}$ and $\{x_1,x_2,x_3,x_4\}$. Say $x_5+x_6=x_2+x_3=x_8$, $x_5+x_7=x_2+x_4=x_9$ and $x_6+x_7=x_3+x_4=x_{10}$. This is possible only if $n\geq 5$ (See Cayley Table \ref{Type 2}).
		\begin{table}[h]
			\caption{Type 2}
			\label{Type 2}
			\begin{center}
				\begin{tabular}{c||c|c|c|c|c}
					
					&$b_1$ &$b_2$ &$b_3$ &$b_4$ &$b_5$ \\
					\hline
					\hline
					$b_1$ &0 &$x_1$ &$x_2$ &$x_3$ &$x_4$ \\
					\hline
					$b_2$&$x_1$ &0 &$x_5$ &$x_6$ &$x_7$ \\
					\hline
					$b_3$&$x_2$ &$x_5$ &0 &$x_8$ &$x_9$ \\
					\hline
					$b_4$&$x_3$ &$x_6$ &$x_8$ &0 &$x_{10}$ \\
					\hline
					$b_5$&$x_4$ &$x_7$ &$x_9$ & $x_{10}$&0 \\
				\end{tabular}
			\end{center}
		\end{table}
		If $\{a_1,a_2\}\not\subset\{x_1,x_2,\dots,x_{10}\}$, then $r(S)=0$. If one element of $\{a_1,a_2\}$ in $\{x_1,x_2,\dots,x_{10}\}$ and other one not in $\{x_1,x_2,\dots,x_{10}\}$, then $r(S)=6$. If $\{a_1,a_2\}\subset\{x_1,x_2,\dots,x_{10}\}$, then $r(S)=12$. 
		
		Suppose $r$-value of $\{x_5,x_6,x_7\}$ is 6, then Cayley Table \ref{Type 2} becomes Table \ref{Type 3}, 
		\begin{table}[h]
			\caption{Type 3}
			\label{Type 3}
			\begin{center}
				\begin{tabular}{c||c|c|c|c|c}
					
					&$b_1$ 	&$b_2$ &$b_3$ &$b_4$ &$b_5$ \\
					\hline
					\hline
					$b_1$	&0		&$x_1$ 	&$x_2$ 	&$x_3$ 		&$x_4$ \\
					\hline
					$b_2$	&$x_1$ 	&0 		&$x_5$ 	&$x_6$ 		&$x_7$ \\
					\hline
					$b_3$	&$x_2$ 	&$x_5$ 	&0 		&$x_7$ 		&$x_6$ \\
					\hline	
					$b_4$	&$x_3$ 	&$x_6$ 	&$x_7$ 	&0 			&$x_{5}$ \\
					\hline
					$b_5$	&$x_4$ 	&$x_7$ 	&$x_6$ 	&$x_{5}$	&0 \\
				\end{tabular}
			\end{center}
		\end{table}
		If $\{a_1,a_2\}\not\subset\{x_1,x_2,x_3,x_4,x_5,x_6,x_7\}$, then $r(S)=0$. If one element of $\{a_1,a_2\}$ in $\{x_1,x_2,x_3,x_4\}$ and other one not in $\{x_1,x_2,x_3,x_4,x_5,x_6,x_7\}$, then $r(S)=6$. If one element of $\{a_1,a_2\}$ in $\{x_5,x_6,x_7\}$ and other one not in $\{x_1,x_2,x_3,x_4,x_5,x_6,x_7\}$, then $r(S)=12$. If $\{a_1,a_2\}\subset\{x_1,x_2,x_3,x_4\}$, then $r(S)=12$. If $\{a_1,a_2\}\subset\{x_5,x_6,x_7\}$, then $r(S)=24$. If one element of $\{a_1,a_2\}$ in $\{x_1,x_2,x_3,x_4\}$ and other one in $\{x_5,x_6,x_7\}$, then $r(S)=18$.
		
		If $r(\{x_1,x_2,x_3,x_4\})=6$, then Cayley table will be similar to following Table \ref{Type 4}. 
		\begin{table}[h]
			\caption{Type 4}
			\label{Type 4}
			\begin{center}
				\begin{tabular}{c||c|c|c|c|c}
					
					&$b_1$ 	&$b_2$ &$b_3$ &$b_4$ &$b_5$ \\
					\hline
					\hline
					$b_1$	&0		&$x_1$ 	&$x_2$ 	&$x_3$ 	&$x_4$ \\
					\hline
					$b_2$	&$x_1$ 	&0 		&$x_3$ 	&$x_2$ 	&$x_5$ \\
					\hline
					$b_3$	&$x_2$ 	&$x_3$ 	&0 		&$x_1$ 	&$x_6$ \\
					\hline	
					$b_4$	&$x_3$ 	&$x_2$ 	&$x_1$ 	&0 		&$x_7$ \\
					\hline
					$b_5$	&$x_4$ 	&$x_5$ 	&$x_6$ 	&$x_7$	&0 \\
				\end{tabular}
			\end{center}
		\end{table}
		\\	This is similar  to the above case; hence $r(S)$ will be one of the values in the set $\{0,6,12,18,24\}$.
		
	\end{proof}
	
	The following two results holds the case $l=k$ and $l=k+1$.
	
	\begin{prop}\label{2k}
		Let $A\subseteq\mathbb{F}_{2^{n-1}}\backslash\{0\}$ with size $k\geq3$. Then $4r(A)$ is a $r$-value of size $2k$ subset in $\mathbb{F}_{2^{n}}\backslash\{0\}$.
	\end{prop}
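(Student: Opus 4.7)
The plan is to construct an explicit subset $S$ of $\mathbb{F}_{2^n}\setminus\{0\}$ of size $2k$ as $S = A \cup B$, where $B$ is the translate of $A$ by a single element of $\mathbb{F}_{2^n}\setminus\mathbb{F}_{2^{n-1}}$, and then read off $r(S)$ from Theorem \ref{union}. Concretely, I would fix any $b\in\mathbb{F}_{2^n}\setminus\mathbb{F}_{2^{n-1}}$ (using the vector representation, an element whose first coordinate is $1$) and set $B := b+A = \{b+a : a\in A\}$.

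First I verify the basic size and disjointness conditions. Since addition by $b$ is a bijection, $|B|=|A|=k$. The first coordinate of every element of $B$ is $1+0=1$, so $B\subseteq\mathbb{F}_{2^n}\setminus\mathbb{F}_{2^{n-1}}$; in particular $A\cap B=\emptyset$, $0\notin B$, and $|S|=2k$. Thus the hypotheses of Theorem \ref{union} are satisfied and
\begin{equation*}
r(S) = r(A\cup B) = r(A) + 3\,r(A,B,B).
\end{equation*}

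The main step is to identify $r(A,B,B)$ with $r(A)$. I would set up the translation bijection: every element of $B$ has the unique form $b+a_1$ with $a_1\in A$, and for $a\in A$ we have
\begin{equation*}
a + (b+a_1) = b + (a+a_1),
\end{equation*}
so $a+(b+a_1)\in B$ if and only if $a+a_1\in A$. Hence the map $(a,a_1)\mapsto(a,b+a_1)$ is a bijection from $\{(a,a_1)\in A\times A : a+a_1\in A\}$ onto $\{(a,b')\in A\times B : a+b'\in B\}$, giving $r(A,B,B)=r(A)$. Substituting into the display above yields $r(S)=r(A)+3r(A)=4r(A)$, as required.

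There is not really a hard obstacle here; the only thing to take some care with is the translation bijection in the middle step, making sure that the sum $a+(b+a_1)$ lands in $B$ precisely when the ``internal'' sum $a+a_1$ lands in $A$, so that translating by $b$ transports $R(A,A,A)$ onto $R(A,B,B)$ exactly. The bound $k\geq 3$ does not appear in the argument itself, and in fact the construction works for any $k\geq 1$; I suspect it is stated only because the conclusion is informative precisely when $r(A)$ can be nonzero, which by Proposition \ref{3_subsset} first happens at $k=3$.
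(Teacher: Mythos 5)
Your proof is correct and follows essentially the same route as the paper: the paper also takes $B$ to be the translate of $A$ by an element of $\mathbb{F}_{2^n}\setminus\mathbb{F}_{2^{n-1}}$ (specifically $[1,0,\dots,0]$) and uses the translation bijection to identify the relevant $r$-value with $r(A)$ before applying Theorem \ref{union}. The only cosmetic differences are that you use an arbitrary coset representative $b$ and compute $r(A,B,B)$ directly, whereas the paper computes $r(B,B,A)$ (equal to $r(A,B,B)$ by the proof of Theorem \ref{union}); your observation that $k\geq 3$ is not actually needed for the argument is also accurate.
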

	\begin{proof}
		Let $A=\{a_1,a_2,\dots,a_k\}\subseteq\mathbb{F}_{2^{n-1}}\backslash\{0\}$ with size $k\geq3$. Let $B=\{b_i=a_i+[1,0,0,\dots,0]|~1\leq i\leq k\}$. Now note that $(a_i,a_j,a_t)\in R(A,A,A)$ if and only if $(b_i,b_j,a_t)\in R(B,B,A)$. Thus $|R(A,A,A)|=|R(B,B,A)|$ and by Theorem \ref{union} $r(A\cup B)=4r(A)$.
	\end{proof}
	
	\begin{prop}\label{2k+1}
		Let $A\subseteq\mathbb{F}_{2^{n-1}}\backslash\{0\}$ with size $k$. Then $4r(A)+6k$ is a $r$-value of a subset of size $2k+1$ in $\mathbb{F}_{2^{n}}\backslash\{0\}$.
	\end{prop}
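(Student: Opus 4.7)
The plan is to mimic the construction of Proposition \ref{2k}, but augment the companion set by one extra element drawn from $\mathbb{F}_{2^n}\backslash\mathbb{F}_{2^{n-1}}$, namely the ``flip vector'' $e=[1,0,0,\dots,0]$ itself. Explicitly, writing $A=\{a_1,\dots,a_k\}$, I set $b_i=a_i+e$ for $1\le i\le k$ and take $B=\{e,b_1,b_2,\dots,b_k\}$. Then $|B|=k+1$ (the $b_i$ are distinct since the $a_i$ are, and $b_i\ne e$ because $a_i\ne 0$), and every element of $B$ has first coordinate $1$, so $B\subseteq \mathbb{F}_{2^n}\backslash\mathbb{F}_{2^{n-1}}$. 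Hence $S=A\cup B$ is disjoint, of size $2k+1$, and Theorem \ref{union} applies to give $r(S)=r(A)+3r(A,B,B)$.

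The real work is evaluating $r(A,B,B)$. I would split by the second coordinate of the pair $(a,b')\in A\times B$. If $b'=e$, then $a+e=b_i$ whenever $a=a_i$, so $a+e\in B$ for every one of the $k$ choices of $a$; this contributes $k$ triples. If $b'=b_j$ for some $j$, then $a+b_j=a+a_j+e$, and this lies in $B$ in exactly two mutually exclusive situations: either $a+a_j=0$, which forces $a=a_j$ and gives the triple $(a_j,b_j,e)$, producing one pair per $j$ and thus another $k$ triples in total; or $a+a_j=a_t$ for some $a_t\in A$, which gives the triple $(a,b_j,b_t)$ and puts these triples in bijection with $R(A,A,A)$, contributing $r(A)$. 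The two subcases don't collide since $0\notin A$, so they don't double-count anything.

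Combining the three contributions yields $r(A,B,B)=2k+r(A)$, and plugging into Theorem \ref{union} gives $r(S)=r(A)+3(2k+r(A))=4r(A)+6k$, as claimed. The only step requiring any care is the case-split for $r(A,B,B)$, and specifically ensuring that the $i=j$ case (where $a+a_j=0$ lands on $e$) is counted separately from the $R(A,A,A)$ case (where $a+a_j$ lands on some $a_t\in A$); the hypothesis $0\notin A$ is what keeps these disjoint. The rest is bookkeeping.
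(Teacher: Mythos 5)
Your proof is correct and essentially matches the paper's: the paper fixes an arbitrary $b\in\mathbb{F}_{2^n}\backslash\mathbb{F}_{2^{n-1}}$ and takes $B=\{b,\,b+a_1,\dots,b+a_k\}$ (you simply choose $b=e$), and arrives at the same count $r(A)+2k$ --- $2k$ from the pairs involving $b$ and $r(A)$ from the pairs $(b_i,b_j)$ --- before applying Theorem \ref{union}. The only cosmetic difference is that you tally $r(A,B,B)$ while the paper tallies $r(B,B,A)$; these are equal by the symmetry established in the proof of Theorem \ref{union}.
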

	\begin{proof}
		Let $A=\{a_1,a_2,\dots,a_k\}$ be a subset of $\mathbb{F}_{2^{n-1}}\backslash\{0\}$ with size $k$. Let $b\in\mathbb{F}_{2^n}\backslash\mathbb{F}_{2^{n-1}}$ be fixed.
		For each $a_i\in A$, there exist $b_i\in\mathbb{F}_{2^n}\backslash\mathbb{F}_{2^{n-1}}$ such that $b+b_i=a_i$. Say $B=\{b,b_1,b_2,\dots,b_k\}$. Now for all $1\leq i\leq k$, $b+b_i=b_i+b=a_i\in A$, giving $2k$ to $r(B,B,A)$. For $1\leq i,j\leq k$ with $i\neq j$, we have $b_i+b_j=a_i+a_j$. Thus, $b_i+b_j\in A$ if and only if $a_i+a_j\in A$. Hence $r(B,B,A)=r(A)+2k$ and by Theorem \ref{union} $r(A\cup B)=4r(A)+6k$.
	\end{proof}
	
	For each subsets size, all possible $r$-values are termed as a spectrum of $r$-values. The following subsections \ref{F_2_4} and \ref{F_2_5} gives the spectrum in $\mathbb{F}_{2^4}$ and $\mathbb{F}_{2^5}$ respectively.
	\subsection{Spectrum of $r$-values in $\mathbb{F}_{2^4}$}\label{F_2_4}
	For each subsets of $\mathbb{F}_{2^4}$, its $r$-value can be obtained from the results listed above. Let $S$ be a subset of $\mathbb{F}_{2^4}\backslash\{0\}$ of cardinality $m$. Suppose $S=A\cup B$, where $A\subset\mathbb{F}_{2^3}\backslash\{0\}$ with cardinality $k$ and  $B\subset\mathbb{F}_{2^4}\backslash\mathbb{F}_{2^3}$ with cardinality $l$ such that $m=k+l$. For $m=1,2,3,4$ we use  Propositions \ref{size 1},\ref{size 2},\ref{3_subsset},\ref{size 4} resp. For subset size 5,6,7 see Table \ref{table2-3-1}
	\begin{table}[h]\label{table2-3-1}	
		\caption{Spectrum of $r$-values in $\mathbb{F}_{2^4}$ of size $m=5,6,7$.}		
		\begin{center}
			\begin{tabular}{|c|c|c|c|c|}
				\hline
				$m$	&$k$ &$l$ &$r(S)$ & Spectrum \\
				\hline
				\multirow{6}{1cm}{\centering 5}	&0 &5 &0 &\multirow{6}{6cm}{\centering $\{0,6,12,13,19\}$}\\
				\cline{2-4}
				&1 &4 &0,6,12 & \\
				\cline{2-4}
				&2 &3 &0,6,12 &\\
				\cline{2-4}
				&3 &2 &0,6,12 & \\
				\cline{2-4}
				&4 &1 &0,6 & \\
				\cline{2-4}
				&5 &0 &12 & \\
				\hline
				\multirow{7}{1cm}{\centering 6}	&0 &6 &0 &\multirow{7}{6cm}{\centering $\{0,12,18,24,42,19,25,31,43\}$}\\
				\cline{2-4}
				&1 &5 &6,12 & \\
				\cline{2-4}
				&2 &4 &0,12,24 & \\
				\cline{2-4}
				&3 &3 &6,12,24 & \\
				\cline{2-4}
				&4 &2 &0,6,12& \\
				\cline{2-4}
				&5 &1 &12 & \\
				\cline{2-4}
				&6 &0 &24 & \\
				\hline
				\multirow{8}{1cm}{\centering 7}	&0 &7 &0 &\multirow{8}{6cm}{\centering $\{0,18,24,30,42,22,34,40,46,64\}$} \\
				\cline{2-4}
				&1 &6 &12,18 & \\
				\cline{2-4}
				&2 &5 &0,12,18,24 & \\
				\cline{2-4}
				&3 &4 & 18,24,42&\\
				\cline{2-4}
				&4 &3 & 12,18,24& \\
				\cline{2-4}
				&5 &2 &12,18 & \\
				\cline{2-4}
				&6 &1 &24 & \\
				\cline{2-4}
				&7 &0 & 42& \\
				\hline
			\end{tabular}
		\end{center}								
	\end{table}
	For any subset $A\subset\mathbb{F}_{2^4}$ of size 1 to 8 with $0\in A$ is obtained by Theorem \ref{zero_added}. Rest of the values for any subsets of $\mathbb{F}_{2^4}$ of size 8 to 16 is obtained by Theorem \ref{compliment}. The spectrum of each subset size in $\mathbb{F}_{2^4}$ is given as Figure \ref{fig:table-24}. The red and green lines are as indicated in Section \ref{F_2_3}.
	\newpage
	\begin{figure}[h]
		\centering
		\includegraphics[width=0.7\linewidth]{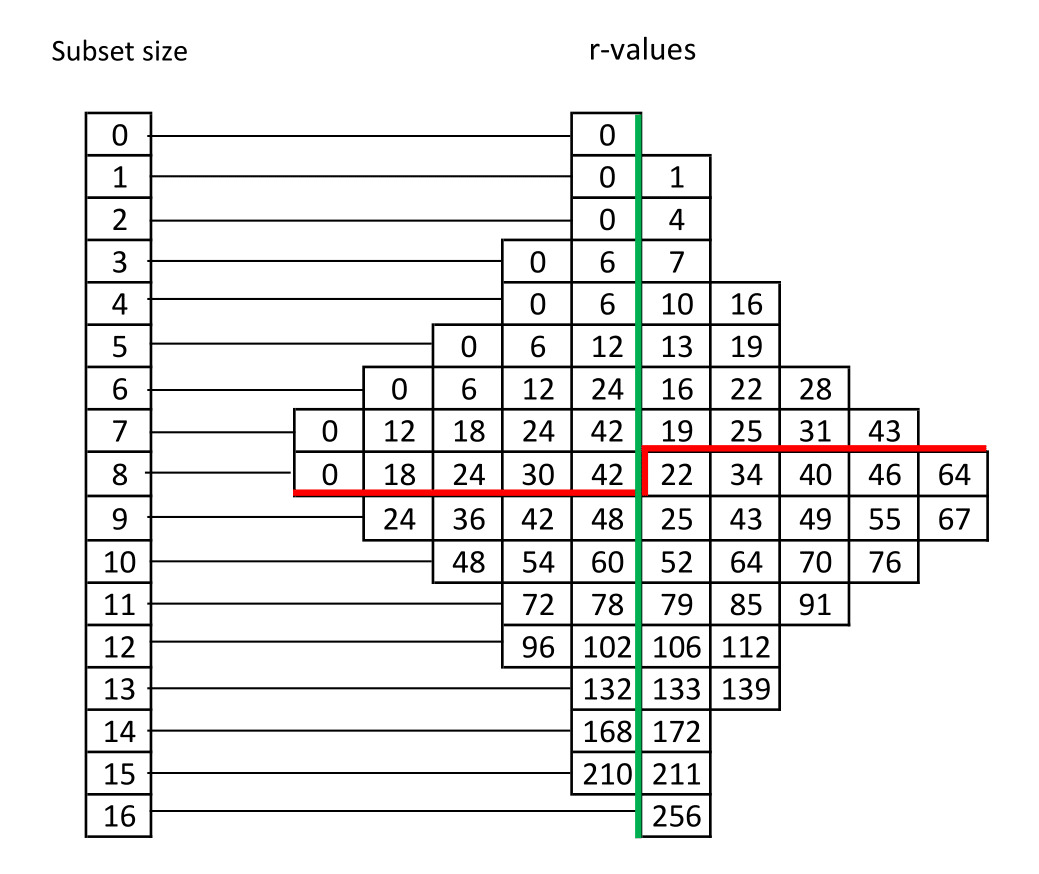}
		\caption{Spectrum of $r$-values in  $\mathbb{F}_{2^4}$}
		\label{fig:table-24}
	\end{figure}
	\subsection{Spectrum of $r$-values in $\mathbb{F}_{2^5}$}\label{F_2_5}		
	It is computationally verified that the above listed result will generate a spectrum of $r$-values in $\mathbb{F}_{2^5}$. In particular the following Figure \ref{fig:table-25} classifies 429,49,67,296 subsets in terms of subset size and $r$-value. The red and green lines are as indicated in Section \ref{F_2_3}.
	\newpage
	\begin{figure}[th]
		\centering
		\includegraphics[width=1\linewidth]{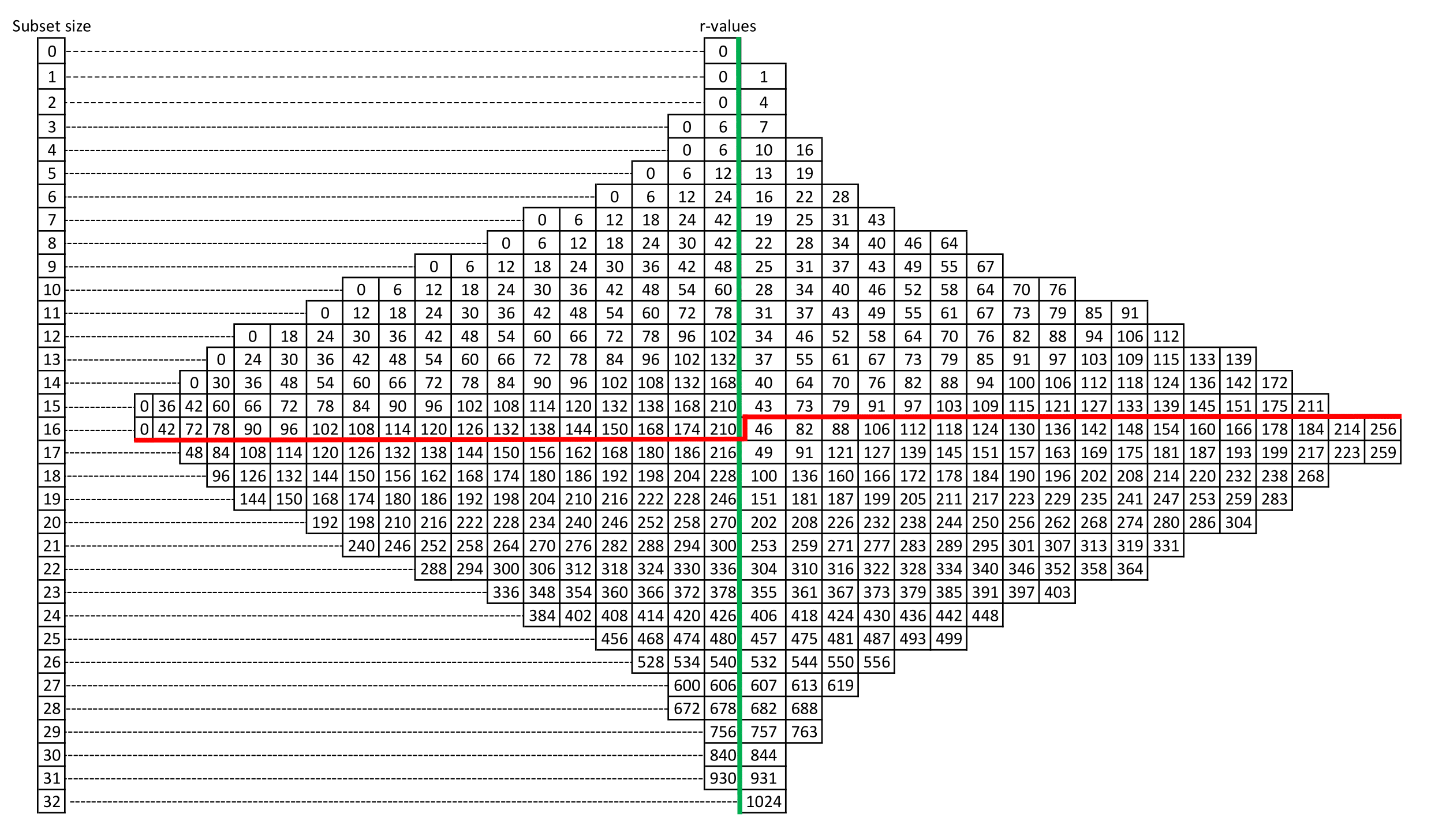}
		\caption{Spectrum of $r$-values in  $\mathbb{F}_{2^5}$}
		\label{fig:table-25}
	\end{figure}
	
	
	\subsection{Enumeration of $r$-values in $\mathbb{F}_{2^n}$, for $n\geq 6$}\label{F_2_n}
	Using the above listed results, it is possible to observe $r$-values of a large number of subsets in $\mathbb{F}_{2^n}$, for $n\geq 6$. The Table \ref{table2-6} gives the some $r$-values for different subset size in $\mathbb{F}_{2^6}\backslash\{0\}$. Using these values we can further continue to observe some collection of $r$-values in $\mathbb{F}_{2^7}$ and so on.
	
					\newpage
	\begin{table}[h]\label{table2-6}	
		\caption{$r$-values of some subsets of $\mathbb{F}_{2^6}\backslash\{0\}$}				
		\begin{center}
			{\scriptsize \begin{tabular}{|c|p{15cm}|}
					\hline
					\textbf{Size} & \textbf{$r$-values} \\ \hline
					0  & 0 \\ \hline
					1  & 0 \\ \hline
					2  & 0 \\ \hline
					3  & 0, 6 \\ \hline
					4  & 0, 6 \\ \hline
					5  & 0, 6, 12 \\ \hline
					6  & 0, 6, 12, 24 \\ \hline
					7  & 0, 6, 12, 18, 24, 42 \\ \hline
					8  & 0, 6, 12, 18, 24, 30, 42 \\ \hline
					9  & 0, 6, 12, 18, 24, 30, 36, 42, 48 \\ \hline
					10 & 0, 6, 12, 18, 24, 30, 36, 42, 48, 54, 60 \\ \hline
					11 & 0, 6, 12, 18, 24, 30, 36, 42, 48, 54, 60, 72, 78 \\ \hline
					12 & 0, 6, 12, 18, 24, 30, 36, 42, 48, 54, 60, 66, 72, 78, 96, 102 \\ \hline
					13 & 0, 6, 12, 18, 24, 30, 36, 42, 48, 54, 60, 66, 72, 78, 84, 96, 102, 132 \\ \hline
					14 & 0, 6, 12, 18, 24, 30, 36, 42, 48, 54, 60, 66, 72, 78, 84, 90, 96, 102, 108, 132, 168 \\ \hline
					15 & 0, 6, 12, 18, 24, 30, 36, 42, 48, 54, 60, 66, 72, 78, 84, 90, 96, 102, 108, 114, 120, 132, 138, 168, 210 \\ \hline
					16 & 0, 6, 12, 18, 24, 30, 36, 42, 48, 54, 60, 66, 72, 78, 84, 90, 96, 102, 108, 114, 120, 126, 132, 138, 144, 150, 168, 174, 210 \\ \hline
					17 & 0, 6, 12, 18, 24, 30, 36, 42, 48, 54, 60, 66, 72, 78, 84, 90, 96, 102, 108, 114, 120, 126, 132, 138, 144, 150, 156, 162, 168, 174, 180, 186, 210, 216 \\ \hline
					18 & 0, 6, 12, 18, 24, 30, 36, 42, 48, 54, 60, 66, 72, 78, 84, 90, 96, 102, 108, 114, 120, 126, 132, 138, 144, 150, 156, 162, 168, 174, 180, 186, 192, 198, 204, 210, 216, 228 \\ \hline
					19 & 0, 12, 18, 24, 30, 36, 42, 48, 54, 60, 66, 72, 78, 84, 90, 96, 102, 108, 114, 120, 126, 132, 138, 144, 150, 156, 162, 168, 174, 180, 186, 192, 198, 204, 210, 216, 222, 228, 246 \\ \hline
					20 & 0, 18, 24, 30, 36, 42, 48, 54, 60, 66, 72, 78, 84, 90, 96, 102, 108, 114, 120, 126, 132, 138, 144, 150, 156, 162, 168, 174, 180, 186, 192, 198, 204, 210, 216, 222, 228, 234, 240, 246, 252, 258, 270 \\ \hline
					21 & 0, 24, 30, 36, 42, 48, 54, 60, 72, 84, 96, 102, 108, 114, 120, 126, 132, 138, 144, 150, 156, 162, 168, 174, 180, 186, 192, 198, 204, 210, 216, 222, 228, 234, 240, 246, 252, 258, 264, 270, 276, 282, 288, 294, 300 \\ \hline
					22 & 0, 30, 36, 42, 48, 54, 60, 72, 96, 108, 120, 126, 132, 138, 144, 150, 156, 162, 168, 174, 180, 186, 192, 198, 204, 210, 216, 222, 228, 234, 240, 246, 252, 258, 264, 270, 276, 282, 288, 294, 300, 306, 312, 318, 324, 330, 336 \\ \hline
					23 & 0, 36, 42, 48, 54, 60, 66, 114, 138, 144, 150, 156, 162, 168, 174, 180, 186, 192, 198, 204, 210, 216, 222, 228, 234, 240, 246, 252, 258, 264, 270, 276, 282, 288, 294, 300, 306, 312, 318, 324, 330, 336, 348, 354, 360, 366, 372, 378 \\ \hline
					24 & 0, 42, 48, 54, 60, 66, 72, 96, 120, 144, 168, 192, 198, 204, 210, 216, 222, 228, 234, 240, 246, 252, 258, 264, 270, 276, 282, 288, 294, 300, 306, 312, 318, 324, 330, 336, 342, 348, 354, 360, 366, 372, 378, 384, 402, 408, 414, 420, 426 \\ \hline
					25 & 0, 48, 54, 60, 66, 72, 144, 168, 192, 216, 240, 246, 252, 258, 264, 270, 276, 282, 288, 294, 300, 306, 312, 318, 324, 330, 336, 342, 348, 354, 360, 366, 372, 378, 384, 402, 408, 414, 420, 426, 456, 468, 474, 480 \\ \hline
					26 & 0, 54, 60, 66, 72, 96, 120, 144, 168, 192, 216, 240, 264, 288, 294, 300, 306, 312, 318, 324, 330, 336, 342, 348, 354, 360, 366, 372, 378, 384, 390, 396, 402, 408, 414, 420, 426, 432, 456, 468, 474, 480, 528, 534, 540 \\ \hline
					27 & 0, 60, 66, 72, 78, 174, 198, 222, 246, 270, 294, 318, 336, 342, 348, 354, 360, 366, 372, 378, 384, 390, 396, 402, 408, 414, 420, 426, 432, 438, 444, 456, 462, 468, 474, 480, 486, 528, 534, 540, 600, 606 \\ \hline
					28 & 0, 66, 72, 78, 120, 144, 192, 216, 240, 264, 288, 312, 336, 360, 384, 402, 408, 414, 420, 426, 432, 438, 444, 450, 456, 462, 468, 474, 480, 486, 492, 498, 528, 534, 540, 546, 600, 606, 672, 678 \\ \hline
					29 & 0, 72, 78, 84, 204, 228, 276, 300, 324, 348, 372, 396, 420, 444, 456, 468, 474, 480, 486, 492, 498, 504, 510, 516, 528, 534, 540, 546, 552, 558, 600, 606, 612, 672, 678, 756 \\ \hline
					30 & 0, 78, 84, 144, 156, 168, 240, 264, 288, 312, 336, 360, 384, 408, 432, 456, 480, 528, 534, 540, 546, 552, 558, 564, 570, 576, 600, 606, 612, 618, 624, 672, 678, 684, 756, 840 \\ \hline
					31 & 0, 84, 90, 156, 162, 168, 216, 222, 228, 234, 240, 258, 330, 354, 378, 402, 426, 450, 474, 498, 522, 546, 570, 600, 612, 618, 624, 630, 636, 642, 672, 684, 690, 696, 756, 762, 840, 930 \\ \hline
					32 & 0, 90, 168, 174, 234, 240, 246, 258, 288, 294, 300, 306, 312, 318, 330, 360, 384, 408, 432, 456, 480, 504, 528, 552, 576, 600, 672, 690, 696, 702, 708, 714, 762, 768, 774, 840, 846, 930 \\ \hline
			\end{tabular}}
		\end{center}
	\end{table}
	\section{Conclusion}
	The study resulted in enumerating $r$-values of large number of subsets of $\mathbb{F}_{2^n}$ by knowing $r$-values in $\mathbb{F}_{2^{n-1}}$. It is observed that every subset $S$ of $\mathbb{F}_{2^n}\backslash\{0\}$ will form a partial Steiner triple system such that the number of blocks in the partial Steiner triple system is equal to $r(A)/6$. For each size $m$, the maximum number of blocks possible in the partial Steiner triple system of order $m$ is equal to the maximum $r$-value in the spectrum of size $m$ in $\mathbb{F}_{2^n}\backslash\{0\}$. These results can be used to study different sets like Sum-free sets and Sidon sets in the future.
		\newpage
        \bibliographystyle{unsrt}

\begin{thebibliography}{99}
	   \bibitem{cameron1990}
Peter Cameron and P.~Erdos, \emph{On the number of sets of integers with
  various properties}, Number Theory (1990).

\bibitem{Claude2023}
Claude Carlet and Stjepan Picek, \emph{On the exponents of apn power functions
  and sidon sets, sum-free sets, and dickson polynomials}, 2023,
  pp.~1507--1525.

\bibitem{Chervak2017}
Ostap Chervak, Oleg Pikhurko, and Katherine Staden, \emph{Minimum number of
  additive tuples in groups of prime order}, The Electronic Journal of
  Combinatorics \textbf{26} (2019).

\bibitem{CILLERUELO20102786}
Javier Cilleruelo, Imre Ruzsa, and Carlos Vinuesa, \emph{Generalized sidon
  sets}, Advances in Mathematics \textbf{225} (2010), no.~5, 2786--2807.

\bibitem{DATSKOVSKY2003193}
Boris~A. Datskovsky, \emph{On the number of monochromatic schur triples},
  Advances in Applied Mathematics \textbf{31} (2003), no.~1, 193--198.

\bibitem{axioms12080724}
Renato~Cordeiro de~Amorim, \emph{On sum-free subsets of abelian groups}, Axioms
  \textbf{12} (2023), no.~8, 724.

\bibitem{Elsholtz}
Christian Elsholtz and Laurence Rackham, \emph{Maximal sum-free sets of integer
  lattice grids}, Journal of the London Mathematical Society \textbf{95}
  (2017), no.~2, 353--372.

\bibitem{Erdo65}
Paul Erdös, \emph{Extremal problems in number theory}, Proc. Sympos. Pure
  Math. \textbf{8} (1965), 181–189.

\bibitem{steiner2020}
Michael~A. Henning and Anders Yeo, \emph{Partial steiner triple systems},
  pp.~171--177, Springer International Publishing, Cham, 2020.

\bibitem{Huczynska2011}
Sophie Huczynska, \emph{Beyond sum-free sets in the natural numbers},
  Electronic Journal of Combinatorics \textbf{21} (2014).

\bibitem{HUCZYNSKA2009831}
Sophie Huczynska, Gary~L. Mullen, and Joseph~L. Yucas, \emph{The extent to
  which subsets are additively closed}, Journal of Combinatorial Theory, Series
  A \textbf{116} (2009), no.~4, 831--843.

\bibitem{LIU2023103614}
Hong Liu, Guanghui Wang, Laurence Wilkes, and Donglei Yang, \emph{Shape of the
  asymptotic maximum sum-free sets in integer lattice grids}, European Journal
  of Combinatorics \textbf{107} (2023), 103614.

\bibitem{MARTIN2006591}
Greg Martin and Kevin O’Bryant, \emph{Constructions of generalized sidon
  sets}, Journal of Combinatorial Theory, Series A \textbf{113} (2006), no.~4,
  591--607.

\bibitem{Mullin}
A.A Mullin, \emph{On mutant sets}, Bulletin of Mathematical Biophysics
  \textbf{24} (1962), 209–215.

\bibitem{Nithish}
Nithish R, Vadiraja G~R, and Prasanna Poojary, \emph{Additive closedness in
  subsets of $\mathbb{Z}_n$}, Communications in Combinatorics and Optimization
  (2024).

\bibitem{sumfreesurvey}
Terence Tao and Van Vu, \emph{Sumfree sets in groups: a survey}, Journal of
  Combinatorics \textbf{8} (2017), 541–552.

\bibitem{TIMMONS2022112659}
Craig Timmons, \emph{Regular saturated graphs and sum-free sets}, Discrete
  Mathematics \textbf{345} (2022), no.~1, 112659.



	\end{thebibliography}
	
\end{document}